\newtheorem{theorem}{Theorem}
\newtheorem{lemma}{Lemma}
\newtheorem{proposition}{Proposition}
\newcommand{\norm}[1]{\left\Vert#1\right\Vert}
\newcommand{\bsa}{\boldsymbol{a}}
\newcommand{\bsl}{\boldsymbol{l}}
\newcommand{\bsv}{\boldsymbol{v}}
\newcommand{\bsw}{\boldsymbol{w}}
\newcommand{\bsb}{\boldsymbol{b}}
\newcommand{\bsx}{\boldsymbol{x}}
\newcommand{\bsh}{\boldsymbol{h}}
\newcommand{\bst}{\boldsymbol{t}}
\newcommand{\bsy}{\boldsymbol{y}}
\newcommand{\cP}{{\cal P}}
\newcommand{\cG}{{\cal G}}
\newcommand{\e}{{\varepsilon}}
\newcommand{\uu}{\mathfrak{u}}
\newcommand{\icomp}{\mathtt{i}}
\newcommand{\bszero}{\boldsymbol{0}}
\newcommand{\rd}{\,\mathrm{d}}
\newcommand{\NN}{\mathbb{N}}
\newcommand{\ZZ}{\mathbb{Z}}
\newcommand{\RR}{\mathbb{R}}
\newcommand{\bL}{\mathbb{L}}
\newcommand{\nat}{\NN}
\renewcommand{\pmod}[1]{\,(\bmod\,#1)}
\newcommand{\il}{\left<}
\newcommand{\ir}{\right>}
\def\qed{\hfill$\Box$}
\newcommand{\abs}[1]{\left\vert#1\right\vert}
\def\calV{{\cal V}}
\newcommand{\To}{\rightarrow}
\begin{document}

\title{$\bL_{\infty}$-approximation in Korobov spaces\\
with Exponential Weights}

\author{Peter Kritzer\thanks{P.~Kritzer
is supported by the Austrian Science Fund (FWF),
Project F5506-N26, which is part of the Special Research Program ``Quasi-Monte Carlo Methods:
Theory and Applications''.},
 Friedrich Pillichshammer\thanks{F.~Pillichshammer
is supported by the Austrian Science Fund (FWF),
Project F5509-N26, 
which is part of the Special Research Program ``Quasi-Monte Carlo Methods:
Theory and Applications''.},
Henryk Wo\'zniakowski\thanks{H. Wo\'zniakowski is partially supported
by the National Science Centre, Poland, based on the decision
DEC-2013/09/B/ST1/04275.}}
\maketitle

\begin{abstract}
We study multivariate $\bL_{\infty}$-approximation for a weighted Korobov space
of periodic functions for which
the Fourier coefficients decay exponentially fast.
The weights are defined, in particular, 
in terms of two sequences $\bsa=\{a_j\}$
and $\bsb=\{b_j\}$ of positive real numbers 
bounded away from zero. We study 
the minimal worst-case error $e^{\bL_{\infty}\mathrm{-app},\Lambda}(n,s)$
of all algorithms that use $n$ information evaluations from 
a class~$\Lambda$ in the $s$-variate case. 
We consider two classes $\Lambda$ in this paper:
the class $\Lambda^{{\rm all}}$ of all linear functionals 
and the class $\Lambda^{{\rm std}}$ of only function evaluations.

We study exponential convergence 
of the minimal worst-case error, which means that
$e^{\bL_{\infty}\mathrm{-app},\Lambda}(n,s)$ converges to zero 
exponentially fast with increasing $n$. Furthermore,
we consider how the error depends on the dimension $s$. 
To this end, we define the notions of $\kappa$-EC-weak, 
EC-polynomial and EC-strong polynomial tractability, where EC stands for ``exponential convergence''. 
In particular, EC-polynomial tractability means that we need 
a polynomial number of information evaluations in $s$ and
$1+\log\,\e^{-1}$ to compute an $\e$-approximation.
We derive necessary and sufficient 
conditions on the sequences $\bsa$ and $\bsb$ 
for obtaining exponential error convergence, and also for obtaining the various 
notions of tractability.
The results are the same for both classes $\Lambda$. 

$\bL_2$-approximation for functions from 
the same function space has been considered in \cite{DKPW13}. 
It is surprising that most results for $\bL_{\infty}$-approximation 
coincide with their counterparts for $\bL_2$-approximation. 
This allows us to deduce also results 
for $\bL_p$-approximation for $p \in [2,\infty]$.
\end{abstract}

\centerline{\begin{minipage}[hc]{130mm}{
{\em Keywords:} Multivariate $\bL_{\infty}$-approximation, 
worst-case error, tractability, exponential convergence, Korobov
spaces  \\
{\em MSC 2000:}  65Y20, 41A25, 41A63}
\end{minipage}}

\section{Introduction}

Function approximation is a topic addressed
in a huge number of papers and monographs.
We study the problem of multivariate $\bL_{\infty}$-approximation
of functions which belong 
to a special class of one-periodic functions defined on $[0,1]^s$.
Here, $s\in\NN:=\{1,2,\dots\}$ and our emphasis is on large $s$.
These functions belong to a weighted Korobov space
whose elements share the property that their Fourier
coefficients decay exponentially fast.

Korobov spaces are special types of reproducing kernel Hilbert spaces
$H(K_s)$ with a reproducing kernel of the form
\[K_s(\bsx,\bsy)=\sum_{\bsh\in\ZZ^s}
\rho (\bsh) \exp (2\pi\icomp \bsh\cdot(\bsx-\bsy))\ \ \ \ \
\mbox{for all $\bsx,\bsy\in[0,1]^s$}\]
for some function $\rho:\ZZ^s\to \RR_+$.

We approximate functions by algorithms that use $n$ information
evaluations, where we allow information evaluations
from the class $\Lambda^{\rm{all}}$ of
all continuous linear functionals or, alternatively, from
the narrower class
$\Lambda^{\rm{std}}$ of standard information
which consists of only function evaluations.
The quality of our algorithms is measured by
the worst-case approximation error, i.e.,
by the largest error over the unit ball of the function space.

For large~$s$, it is crucial to study how the errors of algorithms
depend not only on~$n$ but also on~$s$.
The information complexity $n^{\bL_{\infty}\mathrm{-app},\Lambda}(\e,s)$
is the minimal number~$n$ for which
there exists an
algorithm using $n$ information evaluations from the class
$\Lambda\in\{\Lambda^{\rm{all}},\Lambda^{\rm{std}}\}$ with
an error of at most $\e$ times a constant. If this constant is 1, then
we speak about the absolute error criterion.
If the constant is equal to the initial error, i.e., 
the error without using any information evaluations,
we speak about the normalized
error criterion. The information complexity
is proportional to the minimal cost of computing an $\e$-approximation
since linear algorithms are optimal and their cost is proportional to
$n^{\bL_{\infty}\mathrm{-app},\Lambda}(\e,s)$.

\medskip

In many papers, as for example \cite{KSW06, KWW08, KWW09a,
KWW09b, LH03, ZKH09}, and also in the recent
trilogy \cite{NW08}--\cite{NW12},
Korobov spaces are studied for functions $\rho$ of the form
\[\rho_{\mathrm{pol}} (\bsh)=\rho_{\mathrm{pol}}(h_1,h_2,\ldots,h_s)=
\prod_{j=1}^{s} \rho_{\mathrm{pol}} (h_j),\ \ \
\rho_{\mathrm{pol}}(h_j)=\begin{cases} 1 &\mbox{if $h_j=0$},\\
                        \gamma_j / \abs{h_j}^\alpha & \mbox{if $h_j\neq 0$},
                                                         \end{cases}
\]
where $\alpha>1$ is a smoothness parameter for the elements of
the Korobov space (the number of derivatives of
the functions is roughly $\alpha/2$).
This means that $\rho_{\mathrm{pol}}(\bsh)$ decays
polynomially in $\bsh$. The function $\rho_{\mathrm{pol}}$
also depends on weights $\gamma_j$ which model
the influence of the different variables and
groups of variables
of the problem. For this choice of $\rho_{\mathrm{pol}}$, it can
be shown that one can achieve polynomial error convergence
for $\bL_2$- and $\bL_{\infty}$-approximation and,
under suitable conditions on the weights, also
avoid a curse of dimensionality and achieve different types of
tractability.
By tractability, we mean that
the information complexity 
does neither depend exponentially on $s$ nor on $\e^{-1}$. In particular,
we speak of polynomial tractability if the information complexity
depends at most polynomially
on $s$ and $\e^{-1}$ and of strong polynomial tractability
if it depends polynomially on $\e^{-1}$ and not on
$s$, see \cite{KSW06, KWW08, KWW09a, KWW09b,NSW04}
as well as \cite{NW08}--\cite{NW12} for further
details.  We stress that the results for $\bL_2$- and
$\bL_{\infty}$-approximation for Korobov spaces
based on $\rho_{\mathrm{pol}}$ are not the same.
Indeed, let
$$
\delta_0:=\inf\left\{\delta \ge0:\, 
\sum_{j=1}^\infty \gamma_j^{\delta} <\infty\right\}
\ \ \ \mbox{and}\ \ \ \alpha_0:=\min (\alpha, \delta_0^{-1}).
$$
Then the best rate of error convergence with 
strong polynomial tractability
for a Korobov space based on $\rho_{\mathrm{pol}}$ is, 
if we allow information from $\Lambda^{\rm all}$,
of order $\alpha_0/2$ for $\bL_2$-approximation, 
and $(\alpha_0 -1)/2$, if $\alpha_0>1$, 
for  $\bL_{\infty}$-approximation.
The convergence rates for $\Lambda^{\rm std}$ are 
not known exactly and the known upper bounds are slightly weaker 
than for the class $\Lambda^{\rm all}$. 

\medskip

For the Korobov spaces considered in the present paper, 
we choose $\rho$ as $\rho_{\mathrm{exp}} (\bsh)$ 
which decays exponentially in $\bsh$,
and again $\rho_{\mathrm{exp}}$ depends on weights expressed
by two sequences of positive real numbers $\bsa$ and $\bsb$, 
which model the influence of the variables of
the problem. For this choice of $\rho_{\mathrm{exp}}$, 
we obtain exponential error convergence
instead of polynomial error convergence. To be more precise, let
$e^{\bL_{\infty}\mathrm{-app},\Lambda}(n,s)$
be the minimal worst-case error
among all algorithms that use
$n$ information evaluations from
a permissible class~$\Lambda$ in the $s$-variate case.
By \emph{exponential convergence}
of the $n$th minimal approximation error
we mean that
$$
e^{\bL_{\infty}\mathrm{-app},\Lambda}(n,s)\le C(s)\,
q^{\,(n/C_1(s))^{\,p(s)}}\ \ \ \ \mbox{for all}\ \ \ \ n,s\in\NN,
$$
where, $q\in(0,1)$ is independent of $s$,
whereas $C,C_1,$ and $p$ are allowed to be dependent on~$s$.
We have \emph{uniform exponential convergence}
if $p$ can be chosen independently of $s$. 

Under suitable conditions on the weight sequences $\bsa$ and $\bsb$, 
we achieve stronger notions of tractability than for the case of 
polynomial error convergence, which is
then referred to as Exponential Convergence-tractability 
(or, for short, EC-tractability). Roughly speaking, 
EC-tractability is defined similarly to the standard notions of
tractability, but we replace $\e^{-1}$ by $1 +\log\,\e^{-1}$.

The case of $\bL_2$-approximation for $\rho (\bsh)$ depending exponentially
on $\bsh$ was dealt with in the recent paper \cite{DKPW13}. 
The results there can be also obtained as special cases of 
a more general approach presented in the paper \cite{IKPW15}. 

For the case of $\bL_{\infty}$-approximation, which is considered in the
present paper, it turns out that most of 
the results are the same as for $\bL_2$-approximation. 
Surprising as this may seem,
the reason for the similarities between $\bL_2$- 
and $\bL_{\infty}$-approximation
may lie in the expression of the worst-case error in 
terms of the ordered eigenvalues $\lambda_1,\lambda_2,\ldots$ of
a certain operator $W_s :H(K)\rightarrow H(K)$, see below. 
For $\bL_2$-approximation, the minimal error if we use $n$ evaluations 
from $\Lambda^{\rm all}$ is 
$\sqrt{\lambda_{n+1}}$,
whereas the minimal error for $\bL_{\infty}$-approximation 
is  $\sqrt{\sum_{k=n+1}^\infty \lambda_k}$. 
For the case
of the spaces considered in this paper, the eigenvalues $\lambda_k$ 
depend exponentially on $k$, which means that $\lambda_n$ and
$\sum_{k=n+1}^\infty \lambda_k$ behave similarly, 
which suggests that the errors for $\bL_2$-approximation 
and $\bL_{\infty}$-approximation should
also have similar properties. Moreover, 
as we shall also show in the present paper, there are 
no differences in the results 
between the class $\Lambda^{\rm all}$ and the
class $\Lambda^{\rm std}$, and no difference 
between the absolute and the normalized error criterion.
However, for one concept of tractability there is a difference in the
results between $\bL_2$-approximation 
and $\bL_{\infty}$-approximation.

\medskip

The rest of the paper is structured as follows. 
In Section \ref{secKor}, we introduce the weighted Korobov space 
considered in this paper,
and in Section \ref{seclinfty} we define precisely what we mean 
by exponential error convergence and
by various notions of Exponential Convergence-tractability. 
Our main result is stated in Section \ref{mainresult}.
Furthermore, in Section \ref{relL2}, we outline relations 
of the $\bL_{\infty}$-approximation
problem to the $\bL_2$-approximation problem. 
After some preliminary observations in Section \ref{secprelA}, 
we then outline our main results 
in Sections \ref{secexp}--\ref{secpt}. In Section \ref{secend}, 
we summarize and compare our results on $\bL_\infty$-approximation to previous 
results on $\bL_2$-approximation, and, in the final 
Section~\ref{rem_lp_approx}, we give some remarks on $\bL_p$-approximation.

\section{The Korobov space $H(K_{s,\bsa,\bsb})$}\label{secKor}

The Korobov space $H(K_{s,\bsa,\bsb})$
discussed in this section is a reproducing kernel Hilbert space.
For general information on reproducing kernel Hilbert
spaces we refer to~\cite{Aron}.

Let $\bsa=\{a_j\}_{j \ge 1}$ and
$\bsb=\{b_j\}_{j \ge 1}$ be two sequences of real positive weights
such that
\begin{equation}\label{coefficients}
b_\ast:=\inf_j b_j>0\ \ \ \ \ \mbox{and}\ \ \ \ \
a_\ast:=\inf_j a_j>0.
\end{equation}
Throughout the paper we additionally assume that
$$
a_\ast=a_1 \le a_2 \le a_3 \le \ldots.
$$
Fix $\omega\in(0,1)$ and denote
\[
\omega_{\bsh}=\omega^{\sum_{j=1}^{s}a_j \abs{h_j}^{b_j}}
\qquad\mbox{for all}\qquad \bsh=(h_1,h_2,\dots,h_s)\in\ZZ^s.
\]
We consider a Korobov space of complex-valued one-periodic
functions defined on $[0,1]^s$ with a reproducing
kernel of the form
\begin{equation*}
K_{s,\bsa,\bsb}(\bsx,\bsy) =
\sum_{\bsh \in \ZZ^s} \omega_{\bsh}\,
\exp\big(2\pi
\icomp \bsh \cdot(\bsx-\bsy)\big) \ \ \ \mbox{for all}\ \ \
\bsx,\bsy\in[0,1]^s
\end{equation*}
with the usual dot product
$$
\bsh\cdot(\bsx-\bsy)=\sum_{j=1}^sh_j(x_j-y_j),
$$
where
$h_j,x_j,y_j$ are the $j$th components of the vectors
$\bsh,\bsx,\bsy$, respectively, and $\icomp=\sqrt{-1}$.

The kernel $K_{s,\bsa,\bsb}$ is well
defined since
\begin{equation}\label{finitesumweights}
|K_{s,\bsa,\bsb}(\bsx,\bsy)|\le
K_{s,\bsa,\bsb}(\bsx,\bsx)= \sum_{\bsh \in \ZZ^s} \omega_{\bsh}=
\prod_{j=1}^s \left(1+2 \sum_{h=1}^{\infty} \omega^{a_j h^{b_j}}\right)<\infty.
\end{equation}
The last series is indeed finite since
$$
\sum_{h=1}^{\infty} \omega^{a_j h^{b_j}}\le
\sum_{h=1}^{\infty} \omega^{a_\ast h^{b_\ast}}<\infty,
$$
and both $a_\ast$ and $b_\ast$ are assumed to be strictly greater than zero.

The Korobov space with reproducing kernel
$K_{s,\bsa,\bsb}$ is a reproducing kernel Hilbert space
and is denoted by $H(K_{s,\bsa,\bsb})$.
We suppress the dependence on $\omega$ in the notation
since $\omega$ will be fixed throughout the paper and $\bsa$ and
$\bsb$ will be varied.

Clearly, functions from $H(K_{s,\bsa,\bsb})$ are infinitely many
times differentiable, see \cite{DLPW11},
and, if $b_{\ast}\ge 1$ they are also analytic 
as shown in \cite[Proposition~2]{DKPW13}
\footnote{The assumption $b_{\ast}\ge1$ is not explicit
but it is needed in the proof of \cite[last line of p.27]{DKPW13}.}.

For $f\in H(K_{s,\bsa,\bsb})$ we have
$$
f(\bsx)=\sum_{\bsh\in\ZZ^s} \widehat f(\bsh)\,\exp(2\pi \icomp
\bsh \cdot\bsx) \ \ \ \mbox{for all}\ \ \ \bsx\in [0,1]^s,
$$
where $\widehat{f}(\bsh) =
\int_{[0,1]^s} f(\bsx) \exp(-2 \pi \icomp \bsh \cdot \bsx) \rd \bsx$
is the $\bsh$th Fourier coefficient.
The inner product of $f$ and $g$ from $H(K_{s,\bsa,\bsb})$ is given by
$$
\il f,g\ir_{H(K_{s,\bsa,\bsb})}=\sum_{\bsh\in \ZZ^s}\widehat f(\bsh)\,
\overline{\widehat g(\bsh)}\, \omega_{\bsh}^{-1},
$$ where $\overline{z}$ means the complex conjugate of 
$z \in \mathbb{C}$, and the norm of $f$ from $H(K_{s,\bsa,\bsb})$ by
$$
\|f\|_{H(K_{s,\bsa,\bsb})}=\left(\sum_{\bsh\in \ZZ^s}|\widehat
f(\bsh)|^2\omega_{\bsh}^{-1}\right)^{1/2}<\infty.
$$

Integration of functions from $H(K_{s,\bsa,\bsb})$ was already
considered in \cite{KPW12} and, in the case $a_j=b_j=1$ for all $j \in
\NN$, also in \cite{DLPW11}. Furthermore, multivariate
approximation of functions from
$H(K_{s,\bsa,\bsb})$ in the $\bL_2$ norm was considered in the recent
papers \cite{DKPW13,IKPW15}. A survey of these results can be found in
\cite{KPW14}.
In the present paper we consider
the problem of multivariate approximation in the $\bL_{\infty}$ norm
which we shortly call $\bL_{\infty}$-approximation.

\section{$\bL_{\infty}$-approximation}\label{seclinfty}

In this section we consider $\bL_{\infty}$-approximation of functions from
$H(K_{s,\bsa,\bsb})$.
This problem is defined as an approximation
of the embedding from the Korobov space $H(K_{s,\bsa,\bsb})$ to the space
$\bL_{\infty}([0,1]^s)$, i.e.,
$$
{\rm EMB}_{s,\infty}:H(K_{s,\bsa,\bsb}) 
\rightarrow \bL_{\infty}([0,1]^s)\ \ \ \ \
\mbox{given by}\ \ \ \ \ {\rm EMB}_{s,\infty}(f)=f.
$$
This embedding is continuous since
for $f\in H(K_{s,\bsa,\bsb})$ we have
$f(x)=\il f,K_{s,\bsa,\bsb}(\cdot,x)\ir_{H(K_{s,\bsa,\bsb})}
$ and
\begin{eqnarray*}
\|{\rm EMB}_{s,\infty}(f)\|_{\bL_{\infty}([0,1]^s)}&=&
\|f\|_{\bL_{\infty}([0,1]^s)}=\sup_{x\in[0,1]^s}|f(x)|
=\sup_{x\in[0,1]^s}|\il f,K_{s,\bsa,\bsb}(\cdot,x)\ir_{H(K_{s,\bsa,\bsb})}|\\
&\le& \|f\|_{H(K_{s,\bsa,\bsb})}\,
\sup_{x\in[0,1]^s}\sqrt{K_{s,\bsa,\bsb}(x,x)}\\
&=&
\|f\|_{H(K_{s,\bsa,\bsb})}\,
\prod_{j=1}^s \left(1+2 \sum_{h=1}^{\infty}
\omega^{a_j h^{b_j}}\right)^{1/2}.
\end{eqnarray*}
Here, we use the supremum instead of the essential supremum
since $f$ is continuous. Furthermore, the last inequality is sharp
for $f=K_{s,\bsa,\bsb}(\cdot,x)$ for any $x\in[0,1]^s$.
This proves that
$$
\|{\rm EMB}_{s,\infty}\|=
\prod_{j=1}^s \left(1+2 \sum_{h=1}^{\infty}
\omega^{a_j h^{b_j}}\right)^{1/2}.
$$

Without loss of generality, see e.g., \cite{TWW88},
we approximate ${\rm EMB}_{s,\infty}$
by  linear algorithms~$A_{n,s}$ of the form
\begin{equation}\label{linalg}
  A_{n,s}(f) = \sum_{k=1}^{n}\alpha_k L_k(f)\ \ \ \
\mbox{for} \ \ \ \ \ f \in H(K_{s,\bsa,\bsb}),
\end{equation}
where each $\alpha_k$ is a function from $\bL_{\infty}([0,1]^{s})$ and
each $L_k$ is a continuous linear functional defined on $H(K_{s,\bsa,\bsb})$
from a permissible class $\Lambda$ of information. We consider two
classes:

\begin{itemize}
\item $\Lambda=\Lambda^{\mathrm{all}}$ , the class of all continuous
linear functionals defined on $H(K_{s,\bsa,\bsb})$.
Since  $H(K_{s,\bsa,\bsb})$ is a Hilbert space,
for every $L_k\in \Lambda^{\mathrm{all}}$ there exists a
function $f_k$ from $H(K_{s,\bsa,\bsb})$ such that
$L_k(f)=\il f,f_k\ir_{H(K_{s,\bsa,\bsb})}$ for all $f\in
H(K_{s,\bsa,\bsb})$.
\item $\Lambda=\Lambda^{\mathrm{std}}$, the class of
standard information consisting only of function evaluations.
That is, $L_k\in\Lambda^{\mathrm{std}}$ iff there exists
$\bsx_k\in[0,1]^{s}$ such that $L_k(f)=f(\bsx_k)$ for all $f\in
H(K_{s,\bsa,\bsb})$.
\end{itemize}

Since $H(K_{s,\bsa,\bsb})$ is a reproducing kernel Hilbert space,
function evaluations are continuous linear functionals and therefore
$\Lambda^{\mathrm{std}}\subseteq \Lambda^{\mathrm{all}}$. More
precisely,
$$
L_k(f)=f(\bsx_k)=\il
f,K_{s,\bsa,\bsb}(\cdot,\bsx_k)\ir_{H(K_{s,\bsa,\bsb})}$$
and
$$\|L_k\|=\|K_{s,\bsa,\bsb}(\cdot,\bsx_k)\|_{H(K_{s,\bsa,\bsb})}=
\sqrt{K_{s,\bsa,\bsb}(\bsx_k,\bsx_k)}=\prod_{j=1}^s \left(1+2 \sum_{h=1}^{\infty} \omega^{a_j h^{b_j}}\right)^{1/2}.
$$

The \textit{worst-case error} of the algorithm \eqref{linalg} is
defined as
\[
  e^{\bL_{\infty}\mathrm{-app}}(H(K_{s,\bsa,\bsb}),A_{n,s})
  :=  \sup_{f \in H(K_{s,\bsa,\bsb}) \atop \norm{f}_{H(K_{s,\bsa,\bsb})}\le 1}
  \norm{f-A_{n,s}(f)}_{\bL_{\infty}([0,1]^s)},
\]
where $\norm{f-A_{n,s}(f)}_{\bL_{\infty}([0,1]^s)}$ is
defined in terms of the essential supremum.

Let $e^{\bL_{\infty}\mathrm{-app},\Lambda}(n,s)$ be the $n$th minimal
worst-case error,
$$
e^{\bL_{\infty}\mathrm{-app},\Lambda}(n,s) = \inf_{A_{n,s}}
e^{\bL_{\infty}\mathrm{-app}}(H(K_{s,\bsa,\bsb}),A_{n,s}),
$$
where the infimum is taken
over all linear algorithms $A_{n,s}$ 
of the form \eqref{linalg} using $n$ information evaluations
from the class $\Lambda$.
For $n=0$ the best we can do is to approximate $f$ by zero,
and the initial error is

\begin{equation}\label{eqinitial}
e^{\bL_{\infty}\mathrm{-app}}(0,s) = \|{\rm EMB}_{s,\infty}\|=
\prod_{j=1}^s \left(1+2 \sum_{h=1}^{\infty} \omega^{a_j h^{b_j}}\right)^{1/2}.
\end{equation}
Note that the initial error may be arbitrarily large for large $s$.
For example, take $a_j=b_j=1$ for all $j\ge 1$. Then
$$
\|{\rm EMB}_{s,\infty}\|=\left(1+\frac{2\omega}{1-\omega}\right)^{s/2}
$$
is exponentially large in $s$.
This means that $\bL_{\infty}$-approximation may be  {\it not} properly
normalized. On the other hand, if
$\sum_{j,h=1}^\infty\omega^{a_jh^{b_j}}<\infty$
then $\|{\rm EMB}_{s,\infty}\|$
is of order $1$ for all~$s$, and
$\bL_{\infty}$-approximation is properly normalized. In particular, this
holds for $a_j=j$ and $b_j=1$ since then
$\sum_{j,h=1}^\infty\omega^{a_jh^{b_j}}<(1-\omega)^{-2}$
and $\|{\rm EMB}_{s,\infty}\|\le \exp((1-\omega)^{-2})$.
\medskip

We study \emph{exponential convergence} in this paper, which is
abbreviated as EXP.
As in~\cite{DKPW13,KPW12,KPW14},
this means that
there exist a number $q\in(0,1)$ and
functions $p,C,C_1:\NN\to (0,\infty)$ such that
\begin{equation}\label{exrate}
e^{\bL_{\infty}\mathrm{-app},\Lambda}(n,s)\le C(s)\, q^{\,(n/C_1(s))^{\,p(s)}}
\ \ \ \ \ \mbox{for
all} \ \ \ \ \ n\in \NN.
\end{equation}
If \eqref{exrate} holds we would like to find the largest possible
rate $p(s)$ of exponential convergence
defined as
\begin{equation}\label{exratemax}
p^*(s)=\sup\{\,p(s)\, :\ \ p(s)\ \ \mbox{satisfies \eqref{exrate}}\,\}.
\end{equation}

\emph{Uniform exponential convergence}, abbreviated as UEXP, means that
the function $p$
in \eqref{exrate} can be taken as a constant function, i.e.,
$p(s)=p>0$ for all $s\in\NN$. Similarly, let
$$
p^*=\sup\{\,p\, :\ \ p(s)=p>0\ \ \mbox{satisfies \eqref{exrate} for all
$s\in\NN$}\,\}
$$
denote the largest rate of uniform exponential convergence.

\medskip
We consider the absolute and normalized error criteria.
For $\varepsilon\in (0,1)$, $s\in \NN$, and
$\Lambda\in\{\Lambda^{\mathrm{all}},\Lambda^{\mathrm{std}}\}$,
the \emph{information complexity for the absolute error criterion} is defined as
\[
  n^{\bL_{\infty}\mathrm{-app},\Lambda}_{\mathrm{abs}}(\varepsilon,s):=
  \min\left\{n\,:\,e^{\bL_{\infty}\mathrm{-app},\Lambda}(n,s)
\le\varepsilon \right\}.
\]
Hence,
$n^{\bL_{\infty}\mathrm{-app},\Lambda}_{\mathrm{abs}}(\varepsilon,s)$
is the minimal
number of information evaluations from $\Lambda$
which is required  to achieve an error of at most $\e$.

For $\varepsilon\in (0,1)$, $s\in \NN$, and
$\Lambda\in\{\Lambda^{\mathrm{all}},\Lambda^{\mathrm{std}}\}$,
the \emph{information complexity for the normalized error criterion}
is defined as
\[
  n^{\bL_{\infty}\mathrm{-app},\Lambda}_{\mathrm{norm}}(\varepsilon,s):=
  \min\left\{n\,:\,e^{\bL_{\infty}\mathrm{-app},\Lambda}(n,s)
\le\varepsilon e^{\bL_{\infty}\mathrm{-app}}(0,s)\right\}.
\]
Thus,
$n^{\bL_{\infty}\mathrm{-app},\Lambda}_{\mathrm{norm}}(\varepsilon,s)$
is the minimal
number of information evaluations from $\Lambda$
which is required to reduce the initial error
$e^{\bL_{\infty}\mathrm{-app}}(0,s)$ by a
factor of $\varepsilon \in (0,1)$.

\medskip

In this paper, we study four different cases, namely
\begin{itemize}
 \item the absolute error criterion
       with information from $\Lambda^{\mathrm{std}}$,
 \item the absolute error criterion
       with information from $\Lambda^{\mathrm{all}}$,
 \item the normalized error criterion
       with information from $\Lambda^{\mathrm{std}}$,
 \item the normalized error criterion
       with information from $\Lambda^{\mathrm{all}}$.
\end{itemize}

There are several relations between these cases
which will be helpful in the analysis. First, note that clearly
\begin{equation}\label{errorstdall}
e^{\bL_{\infty}\mathrm{-app},\Lambda^{\mathrm{all}}}(n,s)\,
\le\,
e^{\bL_{\infty}\mathrm{-app},\Lambda^{\mathrm{std}}}(n,s),
\end{equation}
and therefore
\begin{equation}\label{complexitystdall}
n_{\mathrm{setting}}^{\bL_{\infty}\mathrm{-app},\Lambda^{\mathrm{all}}}(\e,s)
\,\le\,
n_{\mathrm{setting}}^{\bL_{\infty}\mathrm{-app},\Lambda^{\mathrm{std}}}(\e,s)
\end{equation}
where $\mathrm{setting} \in \{\mathrm{abs},\mathrm{norm}\}$.
Furthermore, since 
$e^{\bL_{\infty}\mathrm{-app}}(0,s)>1$ we have
\begin{equation}\label{complexityabsnorm}
 n^{\bL_{\infty}\mathrm{-app},\Lambda}_{\mathrm{norm}} 
\le n^{\bL_{\infty}\mathrm{-app},\Lambda}_{\mathrm{abs}}
\ \ \ \ \ \mbox{for $\Lambda\in
\{\Lambda^{\mathrm{all}},\Lambda^{\mathrm{std}}\}$}.
\end{equation}

\medskip

We are ready to define tractability concepts similarly
to \cite{DKPW13, DLPW11, KPW12, KPW14}, and we use
the name \textit{Exponential Convergence (EC) Tractability} for these
concepts, as introduced in~\cite{KPW14}. Following the recent paper of 
Petras and Papageorgiou~\cite{PP13},
we also study $\kappa$-EC-WT which is defined for $\kappa\ge1$.
We stress again that all these concepts correspond to the standard
concepts of tractability with  $\e^{-1}$ replaced by
$1+\log\,\e^{-1}$.

For $\Lambda\in\{\Lambda^{\mathrm{all}},\Lambda^{\mathrm{std}}\}$
and $\mathrm{setting} \in \{\mathrm{abs},\mathrm{norm}\}$, we
say that we have:
\begin{itemize}
\item \emph{$\kappa$-Exponential Convergence-Weak Tractability ($\kappa$-EC-WT)} for $\kappa\ge 1$ if 
$$
\lim_{s+\log\,\e^{-1}\to\infty}\frac{\log\
  n^{\bL_{\infty}\mathrm{-app},\Lambda}_{\mathrm{setting}}(\varepsilon,s)}{s+[\log\,\e^{-1}]^\kappa}=0.
$$
Here we set $\log\,0=0$ by convention. For $\kappa=1$ we say that we have \emph{Exponential Convergence-Weak Tractability (EC-WT)}.
\item \emph{Exponential Convergence-Polynomial Tractability (EC-PT)}
if there exist non-negative
  numbers $c,\tau_1$ and $\tau_2$ such that
$$
n^{\bL_{\infty}\mathrm{-app},\Lambda}_{\mathrm{setting}}(\varepsilon,s)\le
c\,s^{\,\tau_1}\,(1+\log\,\e^{-1})^{\,\tau_2}\ \ \ \ \ \mbox{for all}\ \ \ \
s\in\NN, \ \e\in(0,1).
$$
\item
\emph{Exponential Convergence-Strong Polynomial Tractability (EC-SPT)}
if there exist non-negative
  numbers $c$ and $\tau$  such that
$$
n^{\bL_{\infty}\mathrm{-app},\Lambda}_{\mathrm{setting}}(\varepsilon,s)\le
c\,(1+\log\,\e^{-1})^{\,\tau}\ \ \ \ \ \mbox{for all}\ \ \ \
s\in\NN, \ \e\in(0,1).
$$
The exponent $\tau^*$ of EC-SPT is defined as
the infimum of $\tau$ for which EC-SPT holds.
\end{itemize}
\vskip 1pc
Let us state some remarks about these definitions.

\medskip

Note that for $\kappa=1$ we obtain EC-WT, whereas for $\kappa>1$, the
notion of EC-WT is relaxed. The results for $\kappa=1$
and $\kappa>1$ can be quite different.

It is easy to see that if EC-PT holds for $\Lambda\in \{\Lambda^{\rm
  all},\Lambda^{\rm std}\}$ and for the absolute or normalized error 
criterion, then UEXP holds as well. Indeed,
due to \eqref{complexitystdall} and \eqref{complexityabsnorm}, 
it is sufficient to show this 
result for $\Lambda^{\rm all}$ and the normalized setting. 
Then EC-PT means that 
\[
 n^{\bL_{\infty}\mathrm{-app},
\Lambda^{\rm all}}_{\mathrm{norm}}(\varepsilon, s) \le 
 c s^{\tau_1}(1+\log\varepsilon^{-1})^{\tau_2},
\]
which implies 
\[
 e_{\mathrm{norm}}^{\bL_{\infty}
\mathrm{-app},\Lambda^{\rm all}}(n,s)\le 
\mathrm{e}^{1-((n/(cs^{\tau_1}))^{1/\tau_2}} 
 \prod_{j=1}^s \left(1+2\sum_{h=1}^\infty \omega^{a_j h^{b_j}}\right).
\]
Hence, we have UEXP, as claimed. 

For the \textit{absolute error criterion}, we
note, as in \cite{DKPW13, DLPW11}, that if \eqref{exrate} holds then
\begin{equation}\label{exrate2}
n^{\bL_{\infty}\mathrm{-app},\Lambda}_{\mathrm{abs}}(\e,s)
\le \left\lceil C_1(s) \left(\frac{\log C(s) +
    \log \e^{-1}}{\log q^{-1}}\right)^{1/p(s)}\right\rceil
\ \ \ \ \ \mbox{for all}\ \ \ s\in \NN\ \ \mbox{and}\ \ \e\in (0,1).
\end{equation}
Furthermore, if~\eqref{exrate2} holds then
$$
e^{\bL_{\infty}\mathrm{-app},\Lambda}(n+1,s)\le C(s)\, q^{\,(n/C_1(s))^{\,p(s)}}\ \
\ \ \ \mbox{for all}\ \ \ s,n\in \NN.
$$
This means that~\eqref{exrate} and~\eqref{exrate2} are practically
equivalent. Note that $1/p(s)$ determines the power of $\log\,\e^{-1}$
in the information complexity,
whereas $\log\,q^{-1}$ affects only the multiplier of $\log^{1/p(s)}\e^{-1}$.
{}From this point of view, $p(s)$ is more
important than $q$. That is why
we would like to have~\eqref{exrate} with the largest possible $p(s)$.
We shall see how to find such $p(s)$
for the parameters $(\bsa,\bsb)$ of the weighted Korobov space.

For the \textit{normalized error criterion}, we
replace $\e$ by $\e$ multiplied by the initial error.
If the initial
error is of order one for all $s$ we obtain the same results 
for both error criteria. On the other hand, if the initial error
is badly normalized this may change
tractability results. Note, however, that exponential convergence
is independent of the error criteria.

For both error criteria, exponential convergence
implies that asymptotically, with
respect to $\e$ tending to zero, we need $\mathcal{O}(\log^{1/p(s)}
\e^{-1})$ information evaluations to compute an
$\e$-approximation to functions from the Korobov space. However, it is not
clear how long we have to wait to see this nice asymptotic
behavior especially for large $s$. This, of course, depends on
how $C(s),C_1(s)$ and $p(s)$ depend on $s$. This is
the subject of tractability which is extensively studied
in many papers. So far tractability has been usually studied
in terms of $s$ and $\varepsilon^{-1}$. The current state of
the art on tractability can be found in~\cite{NW08,NW10,NW12}.
In this paper we follow the approach of \cite{DKPW13,DLPW11,KPW12,KPW14}
and we study tractability in terms of $s$ and $1+\log
\varepsilon^{-1}$.

\section{Main result}\label{mainresult}
In this section we present results for $\bL_\infty$-approximation.
The proofs of these results will be given in the subsequent sections.
\begin{theorem}\label{mainthm}
Consider $\bL_{\infty}$-approximation defined over the Korobov space with
kernel $K_{s,\bsa,\bsb}$ with arbitrary sequences $\bsa$ and
$\bsb$ satisfying~\eqref{coefficients}. 
The following results hold for 
$\Lambda\in\{\Lambda^{\rm{all}},\Lambda^{\rm{std}}\}$
and for the absolute and normalized error criterion.
\begin{enumerate}
\item
EXP holds for arbitrary $\bsa$ and $\bsb$ satisfying~\eqref{coefficients} and
$$
p^{*}(s)= 1/B(s) \ \ \ \ \ \mbox{with}\ \ \ \ \ B(s):=\sum_{j=1}^s\frac1{b_j}.
$$
\item
UEXP holds iff $\bsa$ is an arbitrary sequence
and $\bsb$ such that
$$
B:=\sum_{j=1}^\infty\frac1{b_j}<\infty.
$$
If so then $p^*=1/B$.
\item
$\kappa$-EC-WT for $\kappa\ge1$ holds \ iff \ $\lim_{j\to\infty}a_j=\infty$.
\item
$\mbox{EC-WT+UEXP}$ holds \ iff \ $B<\infty\ \ \mbox{and}\ \
\lim_{j\to\infty}a_j=\infty$. 
\item
The following notions are equivalent:
\begin{multline*}
\ \ \ \ \mbox{EC-PT}\ \Leftrightarrow\ \mbox{EC-PT+EXP}\ \Leftrightarrow\
\mbox{EC-PT+UEXP}\\ \Leftrightarrow\ \mbox{EC-SPT}\ \Leftrightarrow\
\mbox{EC-SPT+EXP}\ \Leftrightarrow\ \mbox{EC-SPT+UEXP}.
\end{multline*}
\item
EC-SPT+UEXP holds iff 
\begin{equation}\label{eqalphastar}
B:=\sum_{j=1}^\infty\frac1{b_j}<\infty\quad\mbox{and}\quad
\alpha^*:=\liminf_{j\to\infty}\frac{\log\,a_j}j>0.
\end{equation}
If so, then $\tau^*\in \left[B,B+\tfrac{\log 3}{\alpha^*}\right]$. 
In particular, if $\alpha^*=\infty$, then $\tau^*=B$.
\end{enumerate}
\qed
\end{theorem}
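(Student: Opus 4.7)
The plan is to reduce the $\bL_\infty$-analysis to the spectral data of the embedding operator and then transfer the $\bL_2$-results of \cite{DKPW13,IKPW15}, exploiting the exponential decay of the eigenvalues to show that the two norms agree up to factors that are subexponential in $n$.

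\textbf{Spectral sandwich.} Since $K_{s,\bsa,\bsb}$ is diagonal in the Fourier basis, the embedding operator has eigenvalues equal to the multiset $\{\omega_{\bsh}\}_{\bsh\in\ZZ^s}$; order them as $\lambda_1\ge\lambda_2\ge\cdots$. As recorded in the introduction,
\[
 e^{\bL_2\mathrm{-app},\Lambda^{\rm all}}(n,s)=\sqrt{\lambda_{n+1}},\qquad e^{\bL_\infty\mathrm{-app},\Lambda^{\rm all}}(n,s)=\sqrt{\sum_{k>n}\lambda_k}.
\]
The key quantitative input is that $\lambda_{n+1}$ and $\sum_{k>n}\lambda_k$ have logarithms of the same order: writing $\lambda_k=\omega^{t_k}$ with $t_k$ the $k$-th smallest value of $\sum_j a_j|h_j|^{b_j}$ over $\bsh\in\ZZ^s$, a lattice-point count gives $|\{\bsh: t_\bsh\le T\}|$ of order $T^{B(s)}$, hence $t_k$ of order $k^{1/B(s)}$; this makes $\sum_{k>n}\omega^{t_k}$ controlled by $\omega^{t_{n+1}}$ up to a factor polynomial in $n$, which is invisible on the exponential scale. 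The matching lower bound $\sqrt{\lambda_{n+1}}\le e^{\bL_\infty\mathrm{-app},\Lambda^{\rm all}}(n,s)$ comes from $\|f\|_{\bL_2}\le\|f\|_{\bL_\infty}$. For $\Lambda^{\rm std}$, one truncates the Fourier series to $\{\bsh:\omega_{\bsh}\ge \lambda_{n+1}\}$ and replaces each Fourier coefficient by a lattice or product cubature; the exponential decay of $\omega_{\bsh}$ keeps the aliasing error of the same exponential order, so the same sandwich applies in the $\Lambda^{\rm std}$ column, and the normalized criterion behaves like the absolute one once one notes that the initial error $e^{\bL_\infty\mathrm{-app}}(0,s)$ is at worst exponential in $s$, which again is invisible on the $(s,1+\log\e^{-1})$ scale.

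\textbf{Parts 1--5.} With the sandwich in hand, Part 1 follows from $-\log\lambda_n$ being of order $n^{1/B(s)}$, giving $p^*(s)=1/B(s)$. Part 2 is the uniform-in-$s$ version, which forces $B<\infty$. Parts 3 and 4 follow by tracking when the product $\prod_{j>J}(1+2\sum_h\omega^{a_jh^{b_j}})\to 1$: this occurs iff $a_j\to\infty$, and combining with $B<\infty$ yields the UEXP strengthening in Part 4. In Part 5, the decisive implication is that any form of EC-PT forces $B<\infty$ (else $p^*(s)=1/B(s)\to 0$, contradicting the polynomial dependence on $s$), while a one-coordinate tensorization, as in \cite{DKPW13}, upgrades EC-PT to EC-SPT and closes the chain of equivalences.

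\textbf{Part 6 and main obstacle.} For sufficiency, assume $B<\infty$ and $\alpha^*>0$. Given $\e\in(0,1)$, choose $T$ so that $\mathcal A(T):=\{\bsh\in\ZZ^s:\sum_j a_j|h_j|^{b_j}\le T\}$ exhausts all eigenvalues exceeding $c\e^2$; using $\#\{h\in\ZZ:a_j|h|^{b_j}\le T\}\le 1+2(T/a_j)^{1/b_j}$ together with $\prod_j(1+x_j)\le \exp(\sum_j x_j)$, I would bound
\[
 \log|\mathcal A(T)|\ \le\ \sum_{j=1}^s \log\bigl(1+2(T/a_j)^{1/b_j}\bigr),
\]
split the sum at the index $j_0$ past which $(T/a_j)^{1/b_j}<1/2$, and use $\alpha^*>0$ to control the tail; this yields $n\le c(1+\log\e^{-1})^{B+(\log 3)/\alpha^*}$, hence $\tau^*\le B+(\log 3)/\alpha^*$, with the $(\log 3)/\alpha^*$ term vanishing when $\alpha^*=\infty$. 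For necessity: if $B=\infty$ then $p^*(s)\to 0$ already rules out EC-PT and hence EC-SPT; if $\alpha^*=0$, one picks a subsequence $j_k$ along which $\omega^{a_{j_k}}$ decays too slowly and counts eigenvalues above the $\e^2$ threshold to violate any polynomial envelope $c(1+\log\e^{-1})^\tau$. The lower bound $\tau^*\ge B$ comes from tensoring an optimal one-dimensional lower bound over the first $s$ coordinates and then letting $s\to\infty$. The main obstacle is the cross-interaction between the eigenvalue counting (governed by $B$, i.e., the head of $\bsa$) and the decay profile (governed by $\alpha^*$, i.e., the tail of $\bsa$); a clean separation of these two regimes in both the upper and lower bounds is what the proof of Part 6 ultimately hinges on.
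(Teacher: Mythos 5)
Your overall skeleton matches the paper's: spectral formulas for $\Lambda^{\rm all}$, a product-grid/cubature construction for $\Lambda^{\rm std}$, transfer of necessity from the $\bL_2$ results of \cite{DKPW13}, and an eigenvalue count split at an $\e$-dependent index for Point 6. The ``sandwich'' observation that $\sum_{k>n}\lambda_{s,k}$ exceeds $\lambda_{s,n+1}$ only by a factor polynomial in $n$ is sound and is indeed the heuristic the paper itself offers. However, there are two genuine gaps. First, your treatment of the normalized error criterion is wrong as stated: an initial error that is exponential in $s$ is \emph{not} ``invisible on the $(s,1+\log\e^{-1})$ scale'' for EC-PT or EC-SPT, since replacing $\e$ by $\e\, e^{\bL_\infty\mathrm{-app}}(0,s)$ shifts $\log\e^{-1}$ by $\Theta(s)$, which destroys any $s$-free polynomial envelope. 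The necessity direction for the normalized criterion requires a separate argument: from the lower bound $n\ge(1-\e^2)\prod_{j=1}^s(1+2\omega^{a_j})$ one deduces that EC-PT forces $\limsup_s\sum_{j\le s}\omega^{a_j}/\log s<\infty$ (via \cite{SW01}), hence the initial error is only polynomially large in $s$, and only then do the two criteria coincide. Relatedly, your criterion for Point 3 --- that $\prod_{j>J}(1+2\sum_h\omega^{a_jh^{b_j}})\to1$ iff $a_j\to\infty$ --- is false (convergence of that infinite product needs $\sum_j\omega^{a_j}<\infty$); the correct quantity for weak tractability is $\sum_{j=1}^s\log(1+2\omega^{a_j})=o(s)$, which is what is equivalent to $a_j\to\infty$.

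Second, every $\Lambda^{\rm std}$ upper bound is asserted rather than proved, and this is where the technical weight of the theorem sits. ``Replace each Fourier coefficient by a lattice or product cubature'' hides the aliasing analysis: one needs the spline error bound $4nF_n$ with $F_n=-1+\prod_j(1+2\sum_h\overline{\omega}^{\,a_j2^{-b_j}(m_jh)^{b_j}})$, an explicit choice of mesh sizes ($m_j\asymp m^{1/(B(s)b_j)}$ for EXP/UEXP, $m_j\asymp(\log\e^{-2}/a_j^{\beta})^{1/b_j}$ for EC-SPT), and --- crucially for Point 6 --- the fact that $\alpha^*>0$ forces $m_j=1$ for all $j$ beyond an index $j^*_{\beta,\delta}$ depending on $\e$ but not on $s$, so that $n=\prod_jm_j$ is bounded by $(1+\log\e^{-1})^{B+(\log3)/(\beta\delta)}$ uniformly in $s$. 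Your counting of $|\mathcal A(T)|$ captures this for $\Lambda^{\rm all}$, but nothing in the proposal controls the extra factor of $n$ (or the bound $e^{\bL_\infty,\Lambda^{\rm std}}\le e^{\bL_\infty,\Lambda^{\rm all}}+n\,e^{\bL_2,\Lambda^{\rm std}}$, which the paper uses together with the $\lambda_{s,n}\le M_s^2n^{-2\beta_s}$, $\beta_s>3/2$ estimate for Point 3) in the standard-information column. Without these estimates the claimed identity of the $\Lambda^{\rm std}$ and $\Lambda^{\rm all}$ results, which is the main content of the theorem, is not established.
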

We now briefly comment on Theorem \ref{mainthm}.
We find it surprising that the results are the same for 
$\Lambda^{\rm all}$ and $\Lambda^{\rm std}$ 
and they do not depend on the error criteria.

Exponential convergence holds for all $\bsa$ and~$\bsb$ 
satisfying~\eqref{coefficients}. What is more, the rate $p^*(s)$ 
is independent of $\bsa$ and depends only on $\bsb$. Note that
$B(s)\le s/b_*$ and therefore $p^*(s)\ge b_*/s$, and the last bound
is sharp if $b_j=b_*$ for all $j\in\NN$. 
In this case $p^*(s)$ is small for large $s$ and tends to zero as $s$
approaches infinity. On the other hand, uniform exponential convergence 
holds independently of $\bsa$ and only for summable $b_j^{-1}$.
Obviously, $B$ can be arbitrarily large and $p^*$
arbitrarily small. 

The notion of $\kappa$-EC-WT for $\kappa\ge 1$ 
is independent of $\bsb$ and holds iff $a_j$ goes to infinity. 
We stress that the rate how fast $a_j$ goes to infinity is irrelevant. 
We shall see later that for $\bL_2$-approximation 
the result is different since for $\kappa>1$ and
the class $\Lambda^{\rm all}$, the notion of $\kappa$-EC-WT 
holds for all $\bsa$ and $\bsb$. 
 
The notion of EC-WT does not necessarily imply uniform exponential 
convergence since EC-WT holds for all $\bsb$. To guarantee EC-WT and
UEXP we must assume summable $b_j^{-1}$ and $a_j$ converging to
infinity.

The next point of Theorem \ref{mainthm} shows that a number of
tractability notions are equivalent for $\bL_\infty$-approximation. 
Probably, the most interesting 
one is that EC-PT is equivalent to EC-SPT+UEXP. In particular, there is
no difference between EC-PT and EC-SPT. 

Based on these equivalences, it is therefore enough to find necessary and
sufficient conditions for EC-SPT+UEXP. It turns out that this holds iff 
$b_j^{-1}$'s are summable and $a_j$'s are
exponentially large in $j$.

\section{Relations to $\bL_2$-approximation}\label{relL2}

In \cite{DKPW13} we studied $\bL_2$-approximation of
functions from $H(K_{s,\bsa,\bsb})$.
This problem 
$$
{\rm EMB}_{s,2}:H(K_{s,\bsa,\bsb}) 
\rightarrow \bL_2([0,1]^s)\ \ \ \ \
\mbox{given by}\ \ \ \ \ {\rm EMB}_{s,2}(f)=f
$$
is defined as an approximation of the embedding
from  the Korobov space $H(K_{s,\bsa,\bsb})$ to the space
$\bL_2([0,1]^s)$.
Again for this problem it is enough to use
linear algorithms~$A_{n,s}$ of the form
\begin{equation*}
  A_{n,s}(f) = \sum_{k=1}^{n}\alpha_k L_k(f)\ \ \ \
\mbox{for} \ \ \ \ \ f \in H(K_{s,\bsa,\bsb}),
\end{equation*}
where each $\alpha_k$ is a function from $\bL_{2}([0,1]^{s})$ and
each $L_k$ is a continuous linear functional defined on
$H(K_{s,\bsa,\bsb})$ from the class
$\Lambda \in \{\Lambda^{{\rm all}},\Lambda^{{\rm std}}\}$.

In the same vein as for the $\bL_{\infty}$-case the
\textit{worst-case error} of the algorithm $A_{n,s}$ is
now defined as
\[
  e^{\bL_2\mathrm{-app}}(H(K_{s,\bsa,\bsb}),A_{n,s})
  :=  \sup_{f \in H(K_{s,\bsa,\bsb}) \atop \norm{f}_{H(K_{s,\bsa,\bsb})}\le 1}
  \norm{f-A_{n,s}(f)}_{\bL_{2}([0,1]^s)},
\]
and the $n$th minimal worst-case error is defined by
$$
e^{\bL_2\mathrm{-app},\Lambda}(n,s) = \inf_{A_{n,s}}
e^{\bL_2\mathrm{-app}}(H(K_{s,\bsa,\bsb}),A_{n,s}),
$$
where the infimum is taken
over all linear algorithms $A_{n,s}$ using $n$ information evaluations
from the class $\Lambda$. For $n=0$ we obtain the initial error
$$
e^{\bL_2\mathrm{-app},\Lambda}(0,s) = 1,
$$ 
as shown in \cite{DKPW13}. Hence, there is no difference between 
the absolute and normalized error criteria for $\bL_2$-approximation.

For $\varepsilon\in (0,1)$, $s\in \NN$, and
$\Lambda\in\{\Lambda^{\mathrm{all}},\Lambda^{\mathrm{std}}\}$,
the \emph{information complexity} 
(for both the absolute and normalized error criteria)
is defined as
\[
  n^{\bL_2\mathrm{-app},\Lambda}(\varepsilon,s):=
  \min\left\{n\,:\,e^{\bL_2\mathrm{-app},\Lambda}(n,s)\le\varepsilon \right\}.
\]

It is easy to show that $\bL_2$-approximation is not harder than
$\bL_{\infty}$-approximation for the absolute error criterion. Namely we
have the following lemma.

\begin{lemma}\label{l2linfty}
For $\Lambda \in \{\Lambda^{{\rm all}},\Lambda^{{\rm std}}\}$ we have
\begin{equation}\label{errorl2linfty}
e^{\bL_2\mathrm{-app},\Lambda}(n,s) 
\le e^{\bL_{\infty}\mathrm{-app},\Lambda}(n,s)
\end{equation}
and therefore
\begin{equation}\label{complexityl2linfty}
n^{\bL_2\mathrm{-app},\Lambda}(\varepsilon,s) \le
n^{\bL_{\infty}\mathrm{-app},\Lambda}_{\mathrm{abs}}(\varepsilon,s).
\end{equation}
\qed
\end{lemma}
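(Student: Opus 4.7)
The plan is to exploit two elementary facts: on the probability space $[0,1]^s$ with Lebesgue measure we have $\|g\|_{\bL_2([0,1]^s)}\le \|g\|_{\bL_{\infty}([0,1]^s)}$ for every measurable $g$, and correspondingly $\bL_{\infty}([0,1]^s)\subseteq \bL_2([0,1]^s)$. The second inclusion is what allows us to treat any $\bL_{\infty}$-algorithm of the form \eqref{linalg} (whose coefficient functions $\alpha_k$ lie in $\bL_{\infty}([0,1]^s)$) as an admissible $\bL_2$-algorithm, because its coefficient functions automatically lie in $\bL_2([0,1]^s)$. The class $\Lambda$ from which the information functionals $L_k$ are drawn is the same in both settings, so no additional compatibility check is needed there.

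With this in hand, fix any algorithm $A_{n,s}$ of the form \eqref{linalg} that is admissible for the $\bL_{\infty}$-approximation problem and uses information from $\Lambda$. For every $f\in H(K_{s,\bsa,\bsb})$ with $\|f\|_{H(K_{s,\bsa,\bsb})}\le 1$, the first fact gives $\|f-A_{n,s}(f)\|_{\bL_2([0,1]^s)}\le \|f-A_{n,s}(f)\|_{\bL_{\infty}([0,1]^s)}$. Taking the supremum over the unit ball yields
$$
e^{\bL_2\mathrm{-app}}(H(K_{s,\bsa,\bsb}),A_{n,s})\;\le\;
e^{\bL_{\infty}\mathrm{-app}}(H(K_{s,\bsa,\bsb}),A_{n,s}).
$$
Since every such $A_{n,s}$ is also admissible for the $\bL_2$-approximation problem, infimizing the right-hand side over all $\bL_{\infty}$-admissible algorithms still bounds the infimum on the left (which is taken over the potentially larger class of $\bL_2$-admissible algorithms). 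This proves \eqref{errorl2linfty}.

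The complexity bound \eqref{complexityl2linfty} then follows immediately: setting $n=n^{\bL_{\infty}\mathrm{-app},\Lambda}_{\mathrm{abs}}(\varepsilon,s)$, by definition $e^{\bL_{\infty}\mathrm{-app},\Lambda}(n,s)\le\varepsilon$, and \eqref{errorl2linfty} gives $e^{\bL_2\mathrm{-app},\Lambda}(n,s)\le\varepsilon$, which forces $n^{\bL_2\mathrm{-app},\Lambda}(\varepsilon,s)\le n$. There is no genuine obstacle in this argument; the only point that requires a moment of care is ensuring that admissibility of algorithms transfers from the $\bL_{\infty}$ setting to the $\bL_2$ setting, which is exactly the inclusion $\bL_{\infty}([0,1]^s)\subseteq \bL_2([0,1]^s)$ on a space of finite measure.
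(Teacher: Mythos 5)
Your proposal is correct and follows essentially the same route as the paper: both arguments combine the norm inequality $\|g\|_{\bL_2([0,1]^s)}\le\|g\|_{\bL_\infty([0,1]^s)}$ with the observation that every $\bL_\infty$-admissible algorithm (coefficients $\alpha_k\in\bL_\infty$) is also $\bL_2$-admissible, and then pass to infima. The complexity inequality is deduced in the same immediate way in both.
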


\begin{proof}
Note that any algorithm $A_{n,s}=\sum_{k=1}^n \alpha_k L_k$
with $\alpha_k \in \bL_\infty=\bL_{\infty}([0,1]^s)$ is
also an algorithm $A_{n,s}=\sum_{k=1}^n \alpha_k L_k$
with $\alpha_k \in \bL_2=\bL_2([0,1]^s)$. Thus, the class of
admissible linear algorithms for $\bL_{\infty}$-approximation is
contained in the class of admissible linear algorithms
for $\bL_{2}$-approximation.
Furthermore,
\begin{eqnarray*}
e^{\bL_2\mathrm{-app},\Lambda}(n,s) & = & \inf_{A_{n,s} \atop
\alpha_k \in \bL_2} \sup_{f \in H(K_{s,\bsa,\bsb}) \atop
\norm{f}_{H(K_{s,\bsa,\bsb})}\le 1}
  \norm{f-A_{n,s}(f)}_{\bL_{2}([0,1]^s)}\\
& \le & \inf_{A_{n,s} \atop \alpha_k \in \bL_\infty}
\sup_{f \in H(K_{s,\bsa,\bsb}) \atop \norm{f}_{H(K_{s,\bsa,\bsb})}\le 1}
\norm{f-A_{n,s}(f)}_{\bL_{2}([0,1]^s)}\\
& \le & \inf_{A_{n,s} \atop \alpha_k \in \bL_{\infty}}
\sup_{f \in H(K_{s,\bsa,\bsb}) \atop \norm{f}_{H(K_{s,\bsa,\bsb})}\le 1}
\norm{f-A_{n,s}(f)}_{\bL_{\infty}([0,1]^s)}\\
& = & e^{\bL_{\infty}\mathrm{-app},\Lambda}(n,s),
\end{eqnarray*}
which proves \eqref{errorl2linfty} and implies~\eqref{complexityl2linfty}.
\end{proof}

The notions of (U)EXP, $\kappa$-EC-WT, EC-WT, EC-PT, EC-SPT 
for $\bL_2$-approximation in $H(K_{s,\bsa,\bsb})$ 
are defined in the same way as for $\bL_\infty$-approximation 
in $H(K_{s,\bsa,\bsb})$ but with 
$e^{\bL_{\infty}\mathrm{-app},\Lambda}(n,s)$ 
replaced by $e^{\bL_2\mathrm{-app},\Lambda}(n,s)$ and $
n^{\bL_{\infty}\mathrm{-app},\Lambda}_{\mathrm{setting}}
(\varepsilon,s)$ by $n^{\bL_2\mathrm{-app},\Lambda}(\varepsilon,s)$.

We will be using the results for $\bL_2$-approximation proved
in~\cite{DKPW13}.

\begin{theorem}[{\cite[Theorem~1]{DKPW13}}]\label{thm1in2}
Consider $\bL_2$-approximation defined over the Korobov space
$H(K_{s,\bsa,\bsb})$ with weight sequences $\bsa$ and $\bsb$
satisfying~\eqref{coefficients}. The following results hold for both
classes $\Lambda^{\rm all}$ and $\Lambda^{\rm std}$.
\begin{itemize}
\item
EXP holds for all considered $\bsa$ and $\bsb$ with
$p^*(s)=1/B(s)$, where $B(s)=\sum_{j=1}^sb_j^{-1}$.
\item UEXP holds iff $\bsa$ is an arbitrary sequence and $\bsb$ is
  such that $B=\sum_{j=1}^\infty b_j^{-1}<\infty$. If so then $p^*=1/B$.
\item
$\mbox{EC-WT}$ holds iff $\lim_{j\to\infty}a_j=\infty$.
\item
The notions of $\mbox{EC-PT}$ and $\mbox{EC-SPT}$ are equivalent, and
hold iff
$$B=\sum_{j=1}^\infty b_j^{-1}<\infty\ \ \ \mbox{and}\ \ \
\alpha^*=\liminf_{j\to\infty}\frac{\log\, a_j}{j}>0.
$$
If so then
$
\tau^*\in\left[B,B+\min\left(B,\frac{\log\,3}{\alpha^*}\right)\right]$.
In particular, if $\alpha^*=\infty$ then $\tau^*=B$.
\end{itemize}
\qed
\end{theorem}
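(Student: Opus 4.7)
The plan is to convert $\bL_2$-approximation into a spectral problem and then to lattice-point counting in weighted ellipsoids. The embedding $\mathrm{EMB}_{s,2}:H(K_{s,\bsa,\bsb})\to\bL_2([0,1]^s)$ is Hilbert--Schmidt, and the rescaled Fourier system $\{\sqrt{\omega_\bsh}\,\exp(2\pi\icomp\bsh\cdot(\cdot))\}_{\bsh\in\ZZ^s}$ is an orthonormal eigenbasis of $W_s:=\mathrm{EMB}_{s,2}^{\ast}\mathrm{EMB}_{s,2}$ with eigenvalues $\{\omega_\bsh\}_{\bsh\in\ZZ^s}$. Standard linear-width theory for Hilbert-space valued problems then gives the $n$th minimal error for $\Lambda^{\rm all}$ as $\sqrt{\lambda_{n+1}}$, where $\lambda_1\ge\lambda_2\ge\cdots$ is the nonincreasing rearrangement of the $\omega_\bsh$; equivalently, $n^{\bL_2\mathrm{-app},\Lambda^{\rm all}}(\e,s)=\#\{\bsh\in\ZZ^s:\omega_\bsh>\e^2\}$. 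Since $\lambda_1=\omega_{\bszero}=1$, the absolute and normalized settings coincide.

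For EXP/UEXP I would estimate $\lambda_{n+1}$ via the counting function $N(t):=\#\{\bsh\in\ZZ^s:\sum_j a_j|h_j|^{b_j}\le t\}$. A tensor-product estimate gives $N(t)\le\prod_{j=1}^s(1+2(t/a_j)^{1/b_j})$, and inverting this yields $\lambda_{n+1}\le \omega^{C(s)\,n^{1/B(s)}}$; a matching lower bound comes from the subcube $\{a_j|h_j|^{b_j}\le t/s\}$, which contains at least $\prod_{j=1}^s\lfloor(t/(sa_j))^{1/b_j}\rfloor$ lattice points and forces $\lambda_n\ge \omega^{c(s)\,n^{1/B(s)}}$. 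Hence $p^{*}(s)=1/B(s)$, and UEXP with $p^{*}=1/B$ holds iff $B<\infty$, since $1/B(s)$ decreases monotonically to $1/B$.

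For EC-WT I would use the identity $n^{\bL_2\mathrm{-app},\Lambda^{\rm all}}(\e,s)=\#\{\bsh\in\ZZ^s:\sum_j a_j|h_j|^{b_j}<M_\e\}$ with $M_\e:=2\log\e^{-1}/\log\omega^{-1}$. If $a_j\to\infty$, only finitely many coordinates with $a_j\le M_\e$ can contribute a nonzero $h_j$, and a routine calculation then gives $\log n(\e,s)=o(s+\log\e^{-1})$. Conversely, if $\liminf a_j<\infty$, an infinite subset of indices $j$ admits at least three choices for $h_j$, forcing $n(\e,s)\ge 3^{\,\Omega(s)}$ and violating EC-WT. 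For the equivalence EC-PT $\Leftrightarrow$ EC-SPT the workhorse is the Jensen/Markov inequality
\[
 n(\e,s)\ \le\ \e^{-2/(1+\tau)}\,\prod_{j=1}^s\Bigl(1+2\sum_{h=1}^{\infty}\omega^{\tau a_j h^{b_j}/(1+\tau)}\Bigr),
\]
valid for every $\tau>0$. Under $B<\infty$ and $\alpha^{*}>0$, choosing $\tau$ slightly above $B$ makes the product uniformly bounded in $s$ and yields EC-SPT with $\tau^{*}\le B+(\log 3)/\alpha^{*}$; the lower bound $\tau^{*}\ge B$ comes from the UEXP rate $p^{*}=1/B$. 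Necessity of both conditions is proved by contradiction: if $B=\infty$ UEXP fails and hence EC-PT fails; if $\alpha^{*}=0$ one exhibits a subsequence of $j$'s along which $a_j$ is sub-exponential, and counting along those coordinates forces the product above to diverge for every $\tau$.

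The main obstacle is showing that $\Lambda^{\rm std}$ attains the same rates as $\Lambda^{\rm all}$. The plan is to employ rank-$1$ lattice rules: for generating vector $\bsg$ and $N$ points, the $\bL_2$-error of the induced interpolatory algorithm equals $\sum_{\bsh\in L^{\perp}\setminus\{\bszero\}}\omega_\bsh$, where $L^{\perp}$ is the dual lattice. A component-by-component construction (as in \cite{KSW06}) produces $\bsg$ for which this aliasing sum is controlled by the same Jensen-type product, matching the $\Lambda^{\rm all}$ lower bound up to constants. Consequently the EXP/UEXP rates and all tractability notions coincide for the two information classes, completing the proof.
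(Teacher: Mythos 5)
First, on the comparison itself: the paper does not prove Theorem~\ref{thm1in2} at all --- it is quoted verbatim from \cite{DKPW13} and used as a black box --- so your proposal has to be judged on its own merits. Your overall architecture (spectral reduction to the ordered eigenvalues $\omega_{\bsh}$, the identity $e^{\bL_2\mathrm{-app},\Lambda^{\rm all}}(n,s)=\sqrt{\lambda_{s,n+1}}$ and the counting formula $n(\e,s)=\#\{\bsh:\omega_{\bsh}>\e^2\}$, the Jensen-type product bound for EC-SPT, and the lower bound $\tau^*\ge B$ from the UEXP rate) is sound and is essentially the route taken in \cite{DKPW13}. However, both directions of your EC-WT argument contain genuine gaps.

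For necessity you claim that $\liminf_j a_j<\infty$ forces $n(\e,s)\ge 3^{\Omega(s)}$. This is false. Every $\bsh$ counted in $n(\e,s)=\#\{\bsh:\sum_j a_j|h_j|^{b_j}<M_\e\}$ has at most $M_\e/a_*$ nonzero coordinates, so for \emph{fixed} $\e$ one has $n(\e,s)\le\bigl(1+2s(M_\e/a_*)^{1/b_*}\bigr)^{M_\e/a_*}$, which is polynomial in $s$ of $\e$-dependent degree; hence $\log n(\e,s)=O_\e(\log s)=o(s)$ and no contradiction with EC-WT can be extracted by fixing $\e$ and letting $s\to\infty$. (This is a real difference from the $\bL_\infty$ case, where the trace of $W_s$ grows exponentially in $s$.) To refute EC-WT you must couple the two limits: with $k(s)$ indices $j\le s$ satisfying $a_j\le\alpha$, one has $n(\e,s)\ge\binom{k(s)}{\lfloor M_\e/\alpha\rfloor}$, and choosing $\log\e^{-1}$ of order $s/\log k(s)$ keeps $\log n(\e,s)/(s+\log\e^{-1})$ bounded away from zero.

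For sufficiency you assert that when $a_j\to\infty$ only the coordinates with $a_j\le M_\e$ matter and ``a routine calculation'' finishes. The number of such coordinates, $K(M_\e)=\#\{j: a_j\le M_\e\}$, is finite for each $\e$ but can grow arbitrarily fast in $\e^{-1}$ when $a_j\to\infty$ slowly (e.g.\ $a_j$ of order $\log\log j$), and the per-coordinate product bound then only yields $\log n(\e,s)\le\min(s,K(M_\e))\cdot\log\bigl(1+2(M_\e/a_*)^{1/b_*}\bigr)$, which is \emph{not} $o(s+\log\e^{-1})$ along joint paths in $(s,\e)$. The correct fix --- which you already deploy for EC-PT --- is the Chernoff/Jensen bound $n\lambda_{s,n}^{\eta}\le\sum_k\lambda_{s,k}^{\eta}\le\prod_{j=1}^s(1+2D_\eta\omega^{\eta a_j})$, giving $\log n(\e,s)\le 2\eta\log\e^{-1}+2D_\eta\sum_{j\le s}\omega^{\eta a_j}$ with the second term $o(s)$ by the Ces\`aro argument, followed by $\eta\to0$. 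Two further soft spots: your $\alpha^*=0$ necessity argument only shows that your \emph{upper bound} diverges, not that $n(\e,s)$ itself exceeds every power of $1+\log\e^{-1}$ (a lower bound on the lattice count along the sub-exponential subsequence is needed); and the $\Lambda^{\rm std}$ part via CBC lattice rules is asserted rather than proved --- \cite{DKPW13}, like Section~\ref{secexp} of this paper, instead uses anisotropic regular grids with mesh sizes $m_j$ of order $m^{1/(B(s)b_j)}$, for which the aliasing sum can actually be estimated and shown to match the $\Lambda^{\rm all}$ rate.
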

\vskip 1pc
For the class $\Lambda^{\rm all}$ we have the full characterization of
$\bL_2$- and $\bL_{\infty}$-approximation in terms of the eigenpairs of the
operator $W_s={\rm EMB}_{s,2}^*{\rm EMB}_{s,2}:\ 
H(K_{s,\bsa,\bsb})\to H(K_{s,\bsa,\bsb})$,
which is given by
$$
W_sf=\int_{[0,1]^s} f(\bst) K_{s,\bsa,\bsb} (\cdot,\bst)\rd\bst.
$$
For $\bL_2$-approximation this result is standard and may be found
for instance in \cite{NW08} and \cite{TWW88}, 
whereas for $\bL_{\infty}$-approximation it was proved in 
\cite[Theorem~4 in Section~3]{KWW08} (with $\rho\equiv 1$).

More precisely, for $\bsh\in\ZZ^s$, let a function $e_{\bsh}$ be defined by
$$
e_{\bsh}(\bsx)=\exp(2\pi\icomp\,\bsh\cdot\bsx)\,
\omega_{\bsh}^{1/2}\ \ \ \ \
\mbox{for all}\ \ \ \ \ \bsx \in[0,1]^s.
$$
Then 
$\{e_{\bsh}\}_{\bsh\in\ZZ^s}$ is a complete
orthonormal basis of the Korobov space $H(K_{s,\bsa,\bsb})$.
It is easily checked that the eigenpairs of
$W_s$ are $(\omega_{\bsh},e_{\bsh})$, i.e.,
$$
W_se_{\bsh}=\omega_{\bsh}e_{\bsh}=
\omega^{\,\sum_{j=1}^sa_j|h_j|^{b_j}}\, e_{\bsh}\ \ \ \ \
\mbox{for all}\ \ \ \ \ \bsh\in \ZZ^s,
$$
see also \cite[Section~5]{DKPW13}.
Let the ordered eigenvalues of
$W_s$ be $\{\lambda_{s,k}\}_{k\in\nat}$ with
$$
\lambda_{s,1} \ge \lambda_{s,2}\ge \lambda_{s,3}\ge \ldots.
$$
Obviously,
$\{\lambda_{s,k}\}_{k\in\nat}=\{\omega_{\bsh}\}_{\bsh\in\ZZ^s}$
and $\lambda_{s,1}=1$. Then
\begin{eqnarray*}
e^{\bL_2-\mathrm{app},\Lambda^{\rm all}}(n,s)&=&\lambda_{s,n+1}^{1/2}\\
e^{\bL_\infty-\mathrm{app},\Lambda^{\rm all}}(n,s)&=&
\left(\sum_{k=n+1}^\infty\lambda_{s,k}\right)^{1/2}.
\end{eqnarray*}
Let
$$
{\rm CRI}_{\mathrm{abs}}=1\ \ \ \mbox{and}\ \ \
{\rm CRI}_{\mathrm{norm}}=
e^{\bL_{\infty}\mathrm{-app},\Lambda^{\rm all}}(0,s) =
\prod_{j=1}^s \left(1+2 \sum_{h=1}^{\infty} \omega^{a_j
    h^{b_j}}\right)^{1/2}.
$$
Then for $\mathrm{setting}\in\{\mathrm{abs},\mathrm{norm}\}$ we have
\begin{eqnarray}
n^{\bL_2\mathrm{-app},\Lambda^{\rm all}}(\e,s)
&=&
 \min\bigg\{n \ : \ \lambda_{s,n+1} \le
 \varepsilon^2\,\bigg\},\\
n^{\bL_{\infty}\mathrm{-app},
\Lambda^{\rm all}}_{\mathrm{setting}}(\e,s)\label{imp1}
&=&
 \min\bigg\{n \ : \
\sum_{k=n+1}^\infty \lambda_{s,k} \le \varepsilon^2\,
\mathrm{CRI}^2_{\mathrm{setting}}\bigg\}\label{imp2}.
\end{eqnarray}
Furthermore the $n$th minimal errors are attained
for both $\bL_2$- and $\bL_{\infty}$-approximation by
the same algorithm
\begin{equation}\label{algoptimal}
A_{n,s}(f)=\sum_{k=1}^n \langle f, \eta_{s,k}
\rangle_{H(K_{s,\bsa,\bsb})}\ \eta_{s,k},
\end{equation}
where the $\eta_{s,k}$'s are the eigenfunctions $e_{\bsh}$ corresponding to
the ordered eigenvalues $\lambda_{s,k}$. That is,
$\eta_{s,k}=e_{\bsh(k)}$ and $\lambda_{s,k}=\omega_{\bsh(k)}$ for some
$\bsh(k)\in\ZZ^s$. Note that for any $f\in H(K_{s,\bsa,\bsb})$ we have
\begin{eqnarray*}
\il f,\eta_{s,k}\ir_{L_2([0,1]^s)}&=&
\il {\rm EMB}_{s,2}f,{\rm EMB}_{s,2}\, \eta_{s,k}\ir_{L_2([0,1]^s)}\\
&=&
\il f,W_s\eta_{s,k}\ir_{H(K_{s,\bsa,\bsb})}=
\lambda_{s,k}\,\il f, \eta_{s,k}\ir_{H(K_{s,\bsa,\bsb})}.
\end{eqnarray*}
Therefore, \eqref{algoptimal} can be equivalently rewritten as
\begin{equation}\label{algoptimal2}
A_{n,s}(f)=\sum_{k=1}^n\il f,\eta_{s,k}\ir_{L_2([0,1]^s)}
\lambda_{s,k}^{-1}\, \eta_{s,k}=
\sum_{k=1}^n 
\il f,\widetilde{e}_k\ir_{L_2([0,1]^s)}\, \widetilde{e}_k
\end{equation}
where
$$
\widetilde{e}_k(\bsx)=\frac{\eta_{s,k}(\bsx)}{\sqrt{\lambda_{s,k}}}=
\frac{e_{\bsh(k)}(\bsx)}
{\sqrt{\omega_{\bsh(k)}}}=\exp(2\pi\,\icomp\,\bsh(k)\cdot\bsx).
$$
Clearly, $\widetilde{e}_k$'s are orthonormal in $\bL_2([0,1]^s$ and 
$\|\widetilde{e}_k\|_{\bL_\infty([0,1]^s)}=1$ for all $k\in\NN$. 

We now find an estimate on the $n$th minimal error for
$\bL_\infty$-approximation and the class $\Lambda^{\rm std}$
in terms of the $n$th minimal errors for $\bL_\infty$-approximation 
and the class $\Lambda^{\rm all}$, and for $\bL_2$-approximation
and  the class $\Lambda^{\rm std}$. 
\begin{lemma}\label{lemma2}
We have 
$$
e^{\bL_\infty-\mathrm{app},\Lambda^{\rm std}}(n,s)\ \le\ 
e^{\bL_\infty-\mathrm{app},\Lambda^{\rm all}}(n,s)\ +\ 
n\,e^{\bL_2-\mathrm{app},\Lambda^{\rm std}}(n,s).
$$
\qed
\end{lemma}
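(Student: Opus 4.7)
The plan is to construct a near-optimal standard-information algorithm for $\bL_\infty$-approximation by composing the optimal $\Lambda^{\rm all}$-algorithm for $\bL_\infty$-approximation with a near-optimal $\Lambda^{\rm std}$-algorithm for $\bL_2$-approximation. The key enabling fact is the explicit form \eqref{algoptimal2} of the optimal $\Lambda^{\rm all}$-algorithm, together with the fact that the functions $\widetilde{e}_k$ appearing there are orthonormal in $\bL_2([0,1]^s)$ and have $\bL_\infty$-norm equal to $1$.

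First, I would let $A_{n,s}^{\ast}(f)=\sum_{k=1}^n\langle f,\widetilde{e}_k\rangle_{\bL_2([0,1]^s)}\,\widetilde{e}_k$ be the optimal $\Lambda^{\rm all}$-algorithm from \eqref{algoptimal2}, so that the worst-case error of $A_{n,s}^{\ast}$ equals $e^{\bL_\infty-\mathrm{app},\Lambda^{\rm all}}(n,s)$. Next, fix $\delta>0$ and let $B_{n,s}(f)=\sum_{j=1}^n\beta_j f(\bsx_j)$ with $\beta_j\in\bL_2([0,1]^s)$ be a $\Lambda^{\rm std}$-algorithm for $\bL_2$-approximation whose worst-case error is at most $e^{\bL_2-\mathrm{app},\Lambda^{\rm std}}(n,s)+\delta$. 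I then define the candidate algorithm
\[
\widetilde{A}_{n,s}(f)\ :=\ \sum_{k=1}^n\langle B_{n,s}(f),\widetilde{e}_k\rangle_{\bL_2([0,1]^s)}\,\widetilde{e}_k
\ =\ \sum_{j=1}^n f(\bsx_j)\,\Bigl(\sum_{k=1}^n\langle\beta_j,\widetilde{e}_k\rangle_{\bL_2([0,1]^s)}\,\widetilde{e}_k\Bigr),
\]
which is manifestly a linear algorithm using only the $n$ function evaluations $f(\bsx_1),\ldots,f(\bsx_n)$, i.e., it belongs to the admissible class for $\bL_\infty$-approximation from $\Lambda^{\rm std}$.

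The error analysis then proceeds via the triangle inequality, writing $f-\widetilde{A}_{n,s}(f)=[f-A_{n,s}^{\ast}(f)]+[A_{n,s}^{\ast}(f)-\widetilde{A}_{n,s}(f)]$ and taking $\bL_\infty$-norms. The first piece contributes at most $e^{\bL_\infty-\mathrm{app},\Lambda^{\rm all}}(n,s)\,\|f\|_{H(K_{s,\bsa,\bsb})}$ by definition of $A_{n,s}^{\ast}$. For the second piece I expand
\[
A_{n,s}^{\ast}(f)-\widetilde{A}_{n,s}(f)\ =\ \sum_{k=1}^n\langle f-B_{n,s}(f),\widetilde{e}_k\rangle_{\bL_2([0,1]^s)}\,\widetilde{e}_k,
\]
and bound each inner product by Cauchy--Schwarz (using $\|\widetilde{e}_k\|_{\bL_2}=1$) and each $\widetilde{e}_k$ pointwise by $\|\widetilde{e}_k\|_{\bL_\infty}=1$, yielding
\[
\|A_{n,s}^{\ast}(f)-\widetilde{A}_{n,s}(f)\|_{\bL_\infty([0,1]^s)}\ \le\ n\,\|f-B_{n,s}(f)\|_{\bL_2([0,1]^s)}.
\]
Taking the supremum over $\|f\|_{H(K_{s,\bsa,\bsb})}\le 1$ and then sending $\delta\downarrow 0$ yields the claimed inequality.

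The step I expect to require the most care is the bookkeeping in the second line of the display for $\widetilde{A}_{n,s}$, to make absolutely sure that after combining the two layers of approximation I still have an algorithm that is linear in a genuine $\Lambda^{\rm std}$ sense (same sample points $\bsx_j$, coefficient functions in $\bL_\infty$ via the $\widetilde{e}_k$), so that $\widetilde{A}_{n,s}$ is admissible in the infimum defining $e^{\bL_\infty-\mathrm{app},\Lambda^{\rm std}}(n,s)$. Once that is checked, all remaining estimates are essentially Cauchy--Schwarz plus the pointwise bound $|\widetilde{e}_k(\bsx)|=1$.
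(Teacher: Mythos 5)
Your proposal is correct and follows essentially the same route as the paper's own proof: replace $f$ by $B_{n,s}(f)$ inside the optimal $\Lambda^{\rm all}$-algorithm \eqref{algoptimal2}, use the triangle inequality, and bound the difference term by $n\,\|f-B_{n,s}(f)\|_{\bL_2([0,1]^s)}$ via Cauchy--Schwarz and $\|\widetilde{e}_k\|_{\bL_\infty([0,1]^s)}=1$. The only (harmless) difference is your explicit $\delta$-argument for the infimum, where the paper simply picks an optimal $B_{n,s}$.
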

\begin{proof}
Consider a linear algorithm $B_{n,s}$ that uses $n$ function values
for $\bL_2$-approximation,
$$
B_{n,s}(f)=\sum_{j=1}^n\alpha_j\,f(\bsx_j)\ \ \ \ \ 
\mbox{for}\ \ f\in H(K_{s,\bsa,\bsb}),
$$
where $\alpha_j\in \bL_2([0,1]^s)$ and $\bsx_j\in [0,1]^s$.

We now approximate  the algorithm $A_{n,s}$ given by
\eqref{algoptimal2} by replacing $f$ in the inner product of
$\bL_2([0,1]^s)$ by $B_{n,s}(f)$,
$$
\widetilde{A}_{n,s}(f)=\sum_{k=1}^n\il
B_{n,s}(f),\widetilde{e}_k\ir_{\bL_2([0,1]^s)}\,\widetilde{e}_k=
\sum_{j=1}^nf(\bsx_j)\left(
\sum_{k=1}^n  
\il \alpha_j,\widetilde{e}_k\ir_{\bL_2([0,1]^s)}\,\widetilde{e}_k\right).
$$
This means that the algorithm $\widetilde{A}_{n,s}$ uses at most $n$
function values. Furthermore,
$$
A_{n,s}(f)-\widetilde{A}_{n,s}(f)=\sum_{k=1}^n\il
f-B_{n,s}(f),\widetilde{e}_k\ir_{\bL_2([0,1]^s)}\,\widetilde{e}_k,
$$
which implies
\begin{eqnarray*}
\|A_{n,s}(f)-\widetilde{A}_{n,s}(f)\|_{\bL_\infty([0,1]^s)}&\le&
n\,\|f-B_{n,s}(f)\|_{\bL_2([0,1]^s)}\\
&\le&
n\,\|f\|_{H(K_{s,\bsa,\bsb})}\,
e^{\bL_2\mathrm{-app}}(H(K_{s,\bsa,\bsb}),B_{n,s}).
\end{eqnarray*}
Hence,
\begin{eqnarray*}
\|f-\widetilde{A}_{n,s}\|_{\bL_\infty([0,1]^s)}&\le&
\|f-A_{n,s}(f)\|_{\bL_\infty([0,1]^s)}+
\|A_{n,s}(f)-\widetilde{A}_{n,s}(f)\|_{\bL_\infty([0,1]^s)}\\
&\le&\|f\|_{H(K_{s,\bsa,\bsb})}\left(
e^{\bL_\infty-\mathrm{app},\Lambda^{\rm all}}(n,s)\,+\,
n\,e^{\bL_2\mathrm{-app}}(H(K_{s,\bsa,\bsb}),B_{n,s})\right).
\end{eqnarray*}
Choosing $B_{n,s}$ as an optimal algorithm for $\bL_2$-approximation
and the class $\Lambda^{\rm std}$ we obtain
$$
e^{\bL_\infty-\mathrm{app},\Lambda^{\rm std}}(n,s)\,\le\,
e^{\bL_\infty-\mathrm{app},\Lambda^{\rm all}}(n,s)\,+\,
n\,e^{\bL_2-\mathrm{app},\Lambda^{\rm std}}(n,s),
$$
as claimed. 
\end{proof}
\vskip 1pc
Lemma \ref{lemma2} and known estimates on $\bL_2$-approximation
for the class $\Lambda^{\rm std}$ allow us to find an estimate 
on the $n$th minimal error of $\bL_\infty$-approximation for the class
$\Lambda \in \{\Lambda^{\rm all},\Lambda^{\rm std}\}$  
in terms of the eigenvalues $\lambda_{s,n}$.
 \begin{lemma}\label{lemma3}
Assume that for all $s\in\NN$ there are positive numbers $\beta_s$ and
$M_s>0$ such that
$$
\lambda_{s,n}\le \frac{M_s^2}{n^{2\,\beta_s}}\ \ \ \ \ \mbox{for all}
\ \ n\in \NN.
$$
We assume for the class $\Lambda^{\rm all}$ that $\beta_s>\tfrac12$,
and for the class $\Lambda^{\rm std}$ that $\beta_s>\tfrac32$. Then 
\begin{eqnarray*}
e^{\bL_\infty-\mathrm{app},\Lambda^{\rm all}}(n,s)&\le&
  \frac{M_s}{\sqrt{2\,\beta_s-1}}\ \frac 1{n^{\beta_s-1/2}},\\
e^{\bL_\infty-\mathrm{app},\Lambda^{\rm std}}(n,s)&\le&
M_s\left(\frac{\sqrt{2}}2+C(\beta_s)\right)\,\frac1{n^{\beta_s-3/2}},
\end{eqnarray*}
where $C(x)=2^{2x(2x+1)+x-1/2}((2x+1)/(2x-1))^{1/2}(1+1/(2x))^x$.
\qed
\end{lemma}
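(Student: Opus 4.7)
The plan is to treat the two bounds separately, the first by direct calculation from the eigenvalue formula of Section~\ref{relL2}, and the second by invoking Lemma~\ref{lemma2} together with a known bound for $\bL_2$-approximation from standard information.

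For the class $\Lambda^{\mathrm{all}}$, I would start from the identity
$e^{\bL_\infty\text{-}\mathrm{app},\Lambda^{\mathrm{all}}}(n,s)^2=\sum_{k=n+1}^{\infty}\lambda_{s,k}$
established earlier via the eigenpair analysis of $W_s$. Using the hypothesis $\lambda_{s,k}\le M_s^2/k^{2\beta_s}$ and comparing the tail sum with an integral, I would bound
\[
\sum_{k=n+1}^{\infty}\frac{M_s^2}{k^{2\beta_s}}\ \le\ M_s^2\int_n^{\infty}t^{-2\beta_s}\rd t\ =\ \frac{M_s^2}{(2\beta_s-1)\,n^{2\beta_s-1}},
\]
which holds because $\beta_s>1/2$. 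Taking square roots yields the first inequality. This step is elementary.

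For the class $\Lambda^{\mathrm{std}}$, I would apply Lemma~\ref{lemma2} to split the error into two contributions. The first contribution, $e^{\bL_\infty\text{-}\mathrm{app},\Lambda^{\mathrm{all}}}(n,s)$, is already controlled by the first part of the lemma; under the stronger assumption $\beta_s>3/2$ we have $2\beta_s-1>2$, so $1/\sqrt{2\beta_s-1}<\sqrt{2}/2$, and since $n^{\beta_s-1/2}\ge n^{\beta_s-3/2}$, this first term contributes the $\tfrac{\sqrt{2}}{2}$ summand to the claimed bound. The second contribution, $n\cdot e^{\bL_2\text{-}\mathrm{app},\Lambda^{\mathrm{std}}}(n,s)$, is where I would invoke a known estimate from the literature on $\bL_2$-approximation from function values (of the type proved in \cite{KWW08, KWW09a, KWW09b}), asserting that under the eigenvalue decay $\lambda_{s,k}\le M_s^2/k^{2\beta_s}$ with $\beta_s>1/2$, one has
$e^{\bL_2\text{-}\mathrm{app},\Lambda^{\mathrm{std}}}(n,s)\le C(\beta_s)\,M_s/n^{\beta_s-1/2}$
with exactly the constant $C(x)=2^{2x(2x+1)+x-1/2}((2x+1)/(2x-1))^{1/2}(1+1/(2x))^x$. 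Multiplying by the factor $n$ supplied by Lemma~\ref{lemma2} shifts the exponent from $\beta_s-1/2$ to $\beta_s-3/2$ and produces the second summand. Adding the two gives the claimed inequality.

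The only non-routine ingredient is the $\bL_2$-approximation bound for standard information with the exact constant $C(\beta_s)$; this is imported from existing quasi–Monte Carlo/approximation literature rather than reproved here. The remaining steps are bookkeeping: verifying the integral comparison, comparing the two exponents $\beta_s-1/2$ and $\beta_s-3/2$ using $n\ge 1$, and checking that the constant $\sqrt{2}/2$ indeed dominates $1/\sqrt{2\beta_s-1}$ for $\beta_s>3/2$.
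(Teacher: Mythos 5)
Your proof is correct and follows essentially the same route as the paper: the integral comparison of the eigenvalue tail for $\Lambda^{\rm all}$, then Lemma~\ref{lemma2} combined with an imported $\bL_2$-bound for standard information (the paper cites \cite[Theorem~26.15]{NW12} for exactly the constant $C(\beta_s)$ you state), and the same final comparison of exponents using $\beta_s>3/2$.
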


\begin{proof}
For the class $\Lambda^{\rm all}$, we easily have
\begin{eqnarray*}
\left[e^{\bL_\infty-\mathrm{app},\Lambda^{\rm all}}(n,s)\right]^2
&=& \sum_{k=n+1}^\infty\lambda_{s,k}\le
M_s^2\,\sum_{k=n+1}^\infty\frac1{k^{2\,\beta_s}}\\
&\le& M_s^2\,\int_n^\infty\frac{{\rm
    d}x}{x^{2\,\beta_s}}=\frac{M_s^2}{2\,\beta_s-1}\,\frac1{n^{2\,\beta_s-1}},
 \end{eqnarray*}
as claimed. 

For the class $\Lambda^{\rm std}$, we use \cite[Theorem 26.15]{NW12} 
which states that
$$
e^{\bL_2-\mathrm{app},\Lambda^{\rm std}}(n,s)\le
\frac{M_sC(\beta_s)}{n^{\beta_s-1/2}}.
$$
{}From Lemma \ref{lemma2} we then have 
$$
e^{\bL_\infty-\mathrm{app},\Lambda^{\rm std}}(n,s)\le
\frac{M_s}{\sqrt{2\,\beta_s-1}}\,\frac 1{n^{\beta_s-1/2}}
\,+\, \frac{M_sC(\beta_s)}{n^{\beta_s-3/2}}\le 
M_s\left(\frac{\sqrt{2}}{2}+C(\beta_s)\right)\,\frac
  1{n^{\beta_s-3/2}},
$$
as claimed. 
\end{proof}

\section{Preliminaries for $\Lambda^{\rm{std}}$}\label{secprelA}

Before we proceed to prove our main results, 
we state some preliminary observations that we need 
for $\bL_{\infty}$-approximation 
using the information class $\Lambda^{\rm{std}}$. 

We follow \cite{ZKH09} in our arguments 
and present a particular 
choice of a linear approximation algorithm based on 
function evaluations that allows us to obtain
error bounds.  Given a set of points
$\cP=\{\bsx_1,\ldots,\bsx_n\}$, and function evaluations,
$\{f(\bsx_1),\ldots,f(\bsx_{n})\}$, we define a spline $\sigma$ as
\[\sigma(f;\cP):=\mathrm{argmin}\{\norm{g}_{H(K_{s,\bsa,\bsb})}\,:\ 
g\in H(K_{s,\bsa,\bsb}),
g(\bsx_k)=f(\bsx_k), k=1,2,\ldots,n\}.\]

We would like to use $\sigma(f;\cP)$ 
for approximating $f\in
H(K_{s,\bsa,\bsb})$ in the $\bL_{\infty}$ norm. The first part of the
analysis in \cite{ZKH09} holds for reproducing kernels $K$ of
Hilbert spaces of 1-periodic functions, and it is required that
the kernels $K$ take the form
\[K(\bsx,\bsy)=\widetilde{K}(\{\bsx-\bsy\}),\]
where $\{\cdot\}$ denotes the fractional part of a real number
(defined component-wise). These assumptions are fulfilled
for the kernels $K_{s,\bsa,\bsb}$ considered here. Therefore the
preliminaries outlined in \cite{ZKH09} apply to our case as well,
and we restrict ourselves to summarizing the most crucial facts
from \cite{ZKH09}. In fact, the paper \cite{ZKH09}
discusses approximation algorithms that  use lattice points,
but the theory also applies to the case where we 
consider approximation by
$s$-dimensional grids $\cG_{n,s}$ as in this paper. 
Such regular grids have already been studied 
in~\cite{DKPW13, DLPW11, KPW12, KPW14}. We now
recall their definition. 

For $s \in \NN$, a regular grid with
mesh-sizes $m_1,\ldots,m_s
\in \NN$ is defined as the point set
\begin{equation*}
\cG_{n,s}
=\left\{(k_1/m_1,\ldots, k_s/m_s)\, : \ \
k_j=0,1,\ldots,m_j -1 \mbox{\ \  for all\  } j=1,2,\ldots ,s\right\},
\end{equation*}
where $n=\prod_{j=1}^sm_j$ is the cardinality of $\cG_{n,s}$.
By $\cG_{n,s}^{\bot}$ we denote the
dual of $\cG_{n,s}$, i.e.,
$$
\cG_{n,s}^{\bot}=
\{\bsl =(l_1,\ldots,l_s) \in \ZZ^s \, : \, l_j \equiv 0 \pmod{m_j}
\ \mbox{ for all } \ j=1,2,\ldots,s\}.
$$

For  $\cG_{n,s}$ with mesh-sizes
$m_1,\ldots,m_s\in \NN$, and cardinality $n=m_1\cdots m_s$, we write the set
$\ZZ^s$ as a direct sum of $\cG_{n,s}^\bot$ and the set
\begin{equation}\label{eqVn}
\calV_n= \ZZ^s \cap \prod_{j=1}^s
\left(-\frac{m_j}{2},\frac{m_j}{2}\right],
\end{equation}
i.e.,
\[\ZZ^s= \calV_n \oplus \cG_{n,s}^\bot=\{\bsv+\bsl\, : \, \bsv \in \calV_n \mbox{ and } \bsl \in \cG_{n,s}^\bot\}.\]
Note that $\calV_n$ has the property that any
two distinct vectors in $\calV_n$ differ by a vector that is not
in the dual set $\cG_{n,s}^\bot$, i.e.,
\[\bsv,\bsw\in \calV_n, \bsv\neq\bsw\ \Rightarrow\ \bsv-\bsw\notin
\cG_{n,s}^\bot\setminus\{\bszero\}.\] Furthermore, $\bszero\in \calV_n$ and 
\begin{equation}\label{monomega}
\omega_{\bsv}^{-1}\le
\omega_{\bsv+\bsl}^{-1} \ \ \ \ \mbox{ for all }\\ \bsv\in \calV_n \mbox{ and all }
\bsl\in \cG_{n,s}^\bot.
\end{equation}
This follows from the fact that for 
$\bsv=(v_1,\ldots,v_s)\in \calV_n$ and for 
$\bsl=(l_1,\ldots,l_s)\in \cG_{n,s}^\bot$
we have $|v_j| \le |v_j+l_j|$ for all $j=1,\ldots,s$.

Given $\cG_{n,s}$ with points $\bsx_1,\ldots,\bsx_{n}$, 
it is known, see \cite{ZKH09} and the references therein,
that the spline $\sigma(f;\cP)$ can be expressed
in terms of so-called cardinal functions, $\phi_k$,
$k=1,2,\ldots,n$, 
where each $\phi_k$ is a linear combination of the
$K_{s,\bsa,\bsb}(\cdot,\bsx_r)$. To be more precise,
\[\sigma(f;\cG_{n,s})(\bsx)=\sum_{k=1}^{n} f(\bsx_k)\phi_k (\bsx),\]
\[\phi_k(\bsx)=\sum_{r=1}^{n} K_{s,\bsa,\bsb} (\bsx,\bsx_r) \xi_{r,k},\]
where the $\xi_{r,k}$ are given by a condition expressed by the
Kronecker delta function $\delta$,
\[\delta_{j,k}=\sum_{r=1}^{n} K_{s,\bsa,\bsb} (\bsx_j,\bsx_r) \xi_{r,k}.\]

Going through analogous steps as in \cite[Section~3.1]{ZKH09}, we
arrive at an estimate similar to one formulated for lattice points in
\cite[Theorem~1]{ZKH09},
\begin{equation}\label{eqdoublesum}
\left[e^{\bL_{\infty}{\rm -app},\Lambda^{{\rm std}}}
(H(K_{s,\bsa,\bsb}),\cG_{n,s})\right]^2 
\le 4\sum_{\bsh\notin \calV_n}\omega_{\bsh}=
  4\sum_{\bsv\in \calV_n}
\sum_{\bsl\in \cG_{n,s}^\bot\setminus\{\bszero\}}\omega_{\bsv+\bsl},
\end{equation}
where here and in the following we just write 
$e^{\bL_{\infty}{\rm -app},\Lambda^{{\rm std}}}
(H(K_{s,\bsa,\bsb}),\cG_{n,s})$ instead of 
$e^{\bL_{\infty}{\rm -app},\Lambda^{{\rm std}}}
(H(K_{s,\bsa,\bsb}),\sigma(\cdot;\cG_{n,s}))$. 

It is easy to see that
\[\abs{l}^b\le 2^b\left(\abs{v+l}^b + \abs{v}^b\right)\]
for any $v,l\in\ZZ$ and any $b>0$.
{}From \eqref{monomega} we get for all $\bsv\in
\calV_n$ and all $\bsl\in \cG_{n,s}^\bot$ that
\begin{eqnarray*}
\omega_{\bsv +\bsl}& = & \omega^{\sum_{j=1}^s a_j \abs{v_j +
l_j}^{b_j}}\le \omega^{\sum_{j=1}^s 2^{-b_j} a_j\abs{l_j}^{b_j}}
  \omega^{-\sum_{j=1}^s a_j\abs{v_j}^{b_j}}\\
& = & \omega^{\sum_{j=1}^s 2^{-b_j}
a_j\abs{l_j}^{b_j}}\omega_{\bsv}^{-1}\le \omega^{\sum_{j=1}^s
2^{-b_j} a_j\abs{l_j}^{b_j}}\omega_{\bsv+\bsl}^{-1}.
\end{eqnarray*}
This implies
\[\omega_{\bsv +\bsl}\le (\omega^{1/2})^{\sum_{j=1}^s
2^{-b_j}a_j\abs{l_j}^{b_j}}.\]
Inserting this estimate into \eqref{eqdoublesum} we arrive at
\begin{eqnarray}\label{linftyapproxerr}
\left[e^{\bL_{\infty}{\rm -app},
\Lambda^{{\rm std}}}(H(K_{s,\bsa,\bsb}),\cG_{n,s})\right]^2 \le
 4\sum_{\bsv\in \calV_n}\sum_{\bsl\in
\cG_{n,s}^\bot\setminus\{\bszero\}} 
(\omega^{1/2})^{\sum_{j=1}^s 2^{-b_j}a_j \abs{l_j}^{b_j}}
 =  4n F_n,
\end{eqnarray}
where
\begin{equation}\label{eqdefFn}
F_n=\sum_{\bsl\in \cG_{n,s}^\bot\setminus\{\bszero\}}
\overline{\omega}^{\,\sum_{j=1}^s 2^{-b_j}a_j\abs{l_j}^{b_j}}
=-1+\prod_{j=1}^s
\left(1+2\sum_{h=1}^\infty
\overline{\omega}^{\,a_j 2^{-b_j}(m_j h)^{b_j}}\right),
\end{equation}
and where we write $\overline{\omega}:=\omega^{1/2}$.

\section{(Uniform) exponential convergence}\label{secexp}

In this section, we prove Points 1 and 2 of Theorem \ref{mainthm}
for EXP and UEXP. 

Let us first consider the result for the class $\Lambda^{\rm std}$. 
We now show how to choose a regular grid in the sense 
of Section \ref{secprelA} to obtain the desired result. 

Let $\omega_1\in (\overline{\omega},1)$. For $s\in \NN$ and $\e\in(0,1)$ define
$$
m=\max_{j=1,2,\dots,s}\
\left\lceil \left(
\frac{4^{b_j}}{a_j}\,\frac{\log\left(1+\frac{RC_j 2s}{\log(1+\e^2/4)}\right)}
{\log\,\omega_1^{-1}}\right)^{B(s)}\,\right\rceil,
$$
where
$$
C_j=\sup_{m\in\NN} \,m^{1/s} 
(\overline{\omega}/\omega_1)^{m^{1/B(s)} a_{\ast} 4^{-b_j}}<\infty,
$$
and
$$
R=\max_{1\le j\le s} \, 
\sum_{h=1}^\infty \omega_1^{a_j 4^{-b_j}(h^{b_j}-1)}<\infty.
$$

Let
$\cG_{n,s}^{\ast}$ be a regular grid with mesh-sizes $m_1,m_2,\ldots,m_s$
given by
$$
m_j:=\left\lfloor m^{1/(B(s) \cdot b_j)}\right\rfloor\ \ \ \ \
\mbox{for}\ \ \  j=1,2,\ldots,s\ \ \
\mbox{and}\ \ \ n=\prod_{j=1}^sm_j.
$$
We are now going to show that
\begin{equation}\label{eqgoal}
e^{\bL_{\infty}{\rm -app},
\Lambda^{{\rm std}}}(H(K_{s,\bsa,\bsb}),\cG_{n,s}^\ast)\le\e,\ \ \ \
\mbox{and}\ \ \ \
n=\mathcal{O}\left(\log^{\,B(s)}\left(1+\e^{-1}\right)\right)
\end{equation}
with the factor in the $\mathcal{O}$ notation independent
of $\e^{-1}$ but dependent on $s$.

{}From \eqref{eqdefFn} we have
\[
F_n=
-1+\prod_{j=1}^s\left(1+2\sum_{h=1}^\infty
\overline{\omega}^{\,a_j 2^{-b_j}(m_j h)^{b_j}}\right).
\]
Since $\lfloor x\rfloor\ge x/2$ for all $x\ge1$, we have
$$
(m_jh)^{b_j}\ge (h/2)^{b_j}\,m^{1/B(s)}
\qquad\mbox{for all}\qquad j=1,2,\dots,s.
$$
Hence,
$$
F_n \le -1+\prod_{j=1}^s
\left(1+2 \sum_{h=1}^{\infty} 
\overline{\omega}^{\,m^{1/B(s)} a_j4^{-b_j}\,h^{b_j}}\right),
$$
and similarly
$$
n F_n \le -1+\prod_{j=1}^s
\left(1+2\,n^{1/s}\,\sum_{h=1}^{\infty}
\overline{\omega}^{\,m^{1/B(s)} a_j4^{-b_j}\,h^{b_j}}\right).
$$
Note that
$$
n=\prod_{j=1}^s
m_j=\prod_{j=1}^s
\left\lfloor
m^{1/(B(s) \cdot b_j)} \right\rfloor \le m^{\frac{1}{B(s)}\sum_{j=1}^s
1/b_j} = m.
$$
Now with $q:=\overline{\omega}/\omega_1$ 
we have $q\in (0,1)$, and hence, for $h\ge 1$,
$$
n^{1/s}\,q^{m^{1/B(s)} a_j4^{-b_j}\,h^{b_j}}\le 
m^{1/s}\,q^{m^{1/B(s)} a_{\ast} 4^{-b_j}}\le 
\sup_{m\in\NN} \,m^{1/s}\,q^{m^{1/B(s)} a_{\ast} 4^{-b_j}}=C_j.
$$
Therefore,
$$
n F_n \le -1+\prod_{j=1}^s
\left(1+2 C_j \sum_{h=1}^{\infty}  
\omega_1^{m^{1/B(s)} a_j4^{-b_j}\,h^{b_j}}\right).
$$
We further estimate
\begin{eqnarray}
\sum_{h=1}^{\infty} \omega_1^{m^{1/B(s)} a_j4^{-b_j}\,h^{b_j}}&=&
\omega_1^{m^{1/B(s)} 
a_j4^{-b_j}}\sum_{h=1}^{\infty} 
\omega_1^{m^{1/B(s)} a_j4^{-b_j}\,(h^{b_j}-1)}\\
&\le&
\omega_1^{m^{1/B(s)} a_j4^{-b_j}}
\sum_{h=1}^{\infty} \omega_1^{a_j4^{-b_j}\,(h^{b_j}-1)}\\
&\le&\omega_1^{m^{1/B(s)} a_j4^{-b_j}} R.
\end{eqnarray}
{}From the definition of $m$ we have
$$
\omega_1^{m^{1/B(s)} a_j4^{-b_j}} R
\le \frac{\log(1+\varepsilon^2/4)}{C_j2s}\qquad\mbox{for all}\qquad
j=1,2,\dots,s.
$$
This proves
\begin{equation}\label{eqFnbound}
4nF_n\le 4\left(
-1+\left(1+\frac{\log(1+\e^2/4)}{s}\right)^s\right)
\le
4\left(-1+\exp(\log(1+\e^2/4))\right)=\e^2.
\end{equation}
Now, plugging this into \eqref{linftyapproxerr} 
and taking the square root, we obtain
\begin{equation}\label{linftyapproxerr2}
e^{\bL_{\infty}{\rm -app},
\Lambda^{{\rm std}}}(H(K_{s,\bsa,\bsb}),\cG_{n,s}^\ast) \le \e.
\end{equation}
Hence the first point in \eqref{eqgoal} is shown, and  
it remains to verify that $n$ is of the order
stated in the proposition. We already noted above that $n\le m$. 
However, as pointed out in \cite{KPW12},
\[m=\mathcal{O}\left(\log^{B(s)}\left (1 +\e^{-1}\right)\right),\]
where the factor in the $\mathcal{O}$ notation is independent of
$\e^{-1}$ but dependent on $s$. This completes the proof of 
\eqref{eqgoal}.

Now for the class $\Lambda^{\rm{std}}$,
we conclude from above that
\[n^{\bL_{\infty}\mathrm{-app},
\Lambda^{\rm{std}}}_{\mathrm{abs}}(\varepsilon,s)=\mathcal{O}
\left(\log^{B(s)}\left (1 +\e^{-1}\right)\right).\]
This implies that we indeed have EXP 
for $\Lambda^{\rm{std}}$ for all $\bsa$ and $\bsb$,
with $p(s)=1/B(s)$, and thus
$p^* (s)\ge 1/B(s)$. On the other hand, according to Lemma~\ref{l2linfty}
the rate of exponential convergence for $\bL_{\infty}$-approximation
cannot be larger than for 
$\bL_2$-approximation which was shown 
to be $1/B(s)$ in \cite[Theorem~1, Point~1]{DKPW13}, see Theorem~\ref{thm1in2}.
Thus, we have $p^*(s)= 1/B(s)$.          

We turn to UEXP for the class $\Lambda^{\rm{std}}$.
Suppose that $\bsb$ is such that
\[B=\sum_{j=1}^\infty \frac{1}{b_j}<\infty.\]
Then we can replace $B(s)$ by $B$ in
the above argument,
and we obtain, in exactly the same way,
\[n^{\bL_{\infty}\mathrm{-app},
\Lambda^{\rm{std}}}_{\mathrm{abs}}(\varepsilon,s)=\mathcal{O}
\left(\log^{B}\left (1 +\e^{-1}\right)\right).\]
Hence, we have UEXP with $p^* \ge 1/B$. 
On the other hand, if we have UEXP for $\bL_{\infty}$-approximation, this
implies by Lemma~\ref{l2linfty} 
UEXP for $\bL_2$-approximation,
which in turn, again by 
the results in \cite[Theorem~1, 
Point~2]{DKPW13}, see Theorem~\ref{thm1in2},
implies that $B<\infty$ and that $p^*  \le 1/B$. 

\bigskip

Regarding the class $\Lambda^{\mathrm{all}}$, 
note that we can combine \eqref{errorstdall} and \eqref{errorl2linfty}  to
\begin{equation}\label{eqchainerror}
e^{\bL_2\mathrm{-app},\Lambda^{\mathrm{all}}}(n,s)\le 
e^{\bL_{\infty}\mathrm{-app},
\Lambda^{\mathrm{all}}}(n,s)\le 
e^{\bL_{\infty}\mathrm{-app},\Lambda^{\mathrm{std}}}(n,s).
\end{equation}
We remark that the conditions in Points 1 and 2 of 
Theorem \ref{mainthm} exactly match 
those in \cite[Theorem 1, Points 1 and 2]{DKPW13}.
Hence we can use the results for the class $\Lambda^{\rm std}$
combined with the respective results in \cite{DKPW13} to show
EXP and UEXP for the class $\Lambda^{\mathrm{all}}$. \qed

\section{$\kappa$-EC-weak tractability}\label{seckappa}

In this section we first prove Point 3 of Theorem \ref{mainthm}.
Then we consider the case of $\bL_2$-approximation for $\kappa>1$ since  
this case has not yet been studied.

For $\bL_\infty$-approximation with $\kappa\ge1$, we now prove that 
$\kappa$-EC-WT implies $\lim_ja_j=\infty$. Due to
\eqref{complexityabsnorm} it is enough to consider the class
$\Lambda^{\rm all}$ and the normalized
error criterion. Assume that $\alpha=\sup_ja_j<\infty$.

{}From \eqref{imp2} and the fact that
$\lambda_{s,k}\le1$ for all integer $k$, we have for
$n=n^{\bL_{\infty}\mathrm{-app},\Lambda^{\rm all}}_{\mathrm{norm}}(\e,s)$,
$$
\sum_{k=1}^\infty\lambda_{s,k}-n\le
\sum_{k=n+1}^\infty\lambda_{s,k}\le \e^2\,\sum_{k=1}^\infty\lambda_{s,k}.
$$
Hence,
\begin{equation}\label{eqcomplower}
n\ge(1-\e^2)\,\sum_{k=1}^\infty\lambda_{s,k}=(1-\e^2)\,
\prod_{j=1}^s\left(1+2\sum_{h=1}^\infty\omega^{a_jh^{b_j}}\right)\ge
(1-\e^2)\,\prod_{j=1}^s\left(1+2\omega^{a_j}\right).
\end{equation}
This yields that
\[
\frac{\log\,n}{s+[\log\,\e^{-1}]^\kappa}\ge \frac{\log(1-\e^2)+
\sum_{j=1}^s\log(1+2\omega^{a_j})}{s+[\log\,\e^{-1}]^\kappa}\ge
\frac{\log(1-\e^2)+s\log(1+2\omega^\alpha)}{s+[\log\,\e^{-1}]^\kappa}.
\]
Clearly, for a fixed $\e<1$ and $s$ tending to infinity, the right
hand side of the last formula does not tend to zero. This contradicts
$\kappa$-EC-WT.

We now show that $\lim_ja_j=\infty$ implies $\kappa$-EC-WT.
Due to \eqref{complexityabsnorm} it is enough to consider 
the class $\Lambda^{\rm std}$ and the absolute error
criterion. For any positive $\eta$ we have
$$
\sum_{h=1}^\infty\omega^{\eta\,a_jh^{b_j}}\le
\sum_{h=1}^\infty\omega^{\eta\,a_jh^{b_*}}
\le \omega^{\eta\,a_j}\,\sum_{h=1}^\infty\omega^{\eta\,a_j(h^{b_*}-1)}\le
D_\eta\,\omega^{\eta\,a_j},
$$
where 
$D_\eta=\sum_{h=1}^\infty\omega^{\eta\,a_*(h^{b_*}-1)}<\infty$.
Therefore for any integer $n$ we can estimate
$$
n\lambda^{\eta}_{s,n}\le\sum_{k=1}^\infty\lambda^{\eta}_{s,k}
=\prod_{j=1}^s\left(1+2\sum_{h=1}^\infty\omega^{\eta\,a_jh^{b_j}}\right)
\le \prod_{j=1}^s\left(1+2D_\eta\,\omega^{\eta\,a_j}\right).
$$
Hence, for any positive $\eta$ 
\begin{equation}\label{estlambda}
\lambda_{s,n}\le\frac1{n^{1/\eta}}\,
\prod_{j=1}^s\left(1+2D_\eta\,\omega^{\eta\,a_j}\right)^{1/\eta}
\ \ \ \ \ \mbox{for all}\ \ s,n\in\NN.
\end{equation}
Thus the assumption of Lemma \ref{lemma3} holds with 
$$
\beta_s=1/(2\eta)\ \ \ \ \mbox{and}\ \ \ \ M_s^2=
\prod_{j=1}^s\left(1+2D_\eta\,\omega^{\eta\,a_j}\right)^{1/\eta}.
$$
For $\eta<\tfrac13$ we have $\beta_s>\tfrac32$ and 
$$
e^{\bL_\infty-\mathrm{app},\Lambda^{\rm std}}(n,s)\le 
M_s\left(\frac{\sqrt{2}}2+C(1/(2\eta))\right)\,
\frac1{n^{(1/\eta-3)/2}}.
$$  
Hence 
$$
e^{\bL_\infty-\mathrm{app},\Lambda^{\rm std}}(n,s)\le \e
$$ 
for 
$$
n \ge 1+\left(\left(\frac{\sqrt{2}}2+C(1/(2\eta))\right)\,\frac1{\e}\,
\,\prod_{j=1}^s\left  
(1+2D_\eta\,\omega^{\,\eta a_j}\right)^{1/(2\eta)}\right)^{2\eta/(1-3\eta)}.
$$
and therefore we have 
$$
n_{\mathrm{abs}}^{\bL_{\infty}\mathrm{-app},\Lambda^{\rm all}}(\e,s)
\le 1+\left(\left(\frac{\sqrt{2}}2+C(1/(2\eta))\right)\,\frac1{\e}\,
\,\prod_{j=1}^s\left     
(1+2D_\eta\,\omega^{\,\eta a_j}\right)^{1/(2\eta)}\right)^{2\eta/(1-3\eta)}.
$$
Hence, using $\log (1+x) \le x$ for all $x\ge 0$, we obtain 
$$
\log(n_{\mathrm{abs}}^{\bL_{\infty}\mathrm{-app},\Lambda^{\rm all}}(\e,s)-1)\le
\frac{2\eta}{1-3\eta}\left(\log\left(\frac{\sqrt{2}}2+C(1/(2\eta))\right)
\,+\,\log\,\e^{-1}\right)\,+\, 
\frac{2\,D_\eta}{1-3\eta}\,\sum_{j=1}^s\omega^{\,\eta\,a_j}.
$$

Note that $\lim_j a_j=\infty$ implies that
$\lim_j\omega^{\eta\,a_j}=0$, 
and $\lim_s \sum_{j=1}^s\omega^{\eta\,a_j}/s=0$. Hence
$$
\limsup_{s+\log\varepsilon^{-1}\To\infty}
\frac{\log n_{\mathrm{abs}}^{\bL_{\infty}
\mathrm{-app},\Lambda^{\rm all}}(\e,s)}{s+
[\log\varepsilon^{-1}]^\kappa}\le \frac{2 \eta}{1-3\eta}.
$$
Since $\eta$ can be arbitrarily small, this proves that
$$ 
\lim_{s+\log\varepsilon^{-1}\To\infty}
\frac{\log n_{\mathrm{abs}}^{\bL_{\infty}\mathrm{-app},
\Lambda^{\rm all}}(\e,s)}{s+[\log\varepsilon^{-1}]^\kappa}=0,
$$
and completes the proof of Point 3 of Theorem \ref{mainthm}. 

Point 4 of Theorem \ref{mainthm} 
easily follows by combining Point 2 and Point 3 with
$\kappa=1$. \qed

\medskip

We now turn to $\kappa$-EC-WT for $\bL_2$-approximation. The case
$\kappa=1$ corresponds to EC-WT and is covered in Theorem
\ref{thm1in2} and holds iff $\lim_ja_j=\infty$. We now assume that
$\kappa>1$ and show that the last condition on $\bsa$ is not needed
for the class $\Lambda^{\rm all}$. The case of $\kappa>1$ for 
the class $\Lambda^{\rm std}$ is open. 
\begin{theorem}\label{thmkappawt}
Consider $\bL_2$-approximation defined over the Korobov
space $H(K_{s,\bsa,\bsb})$ with weight sequences $\bsa$ and $\bsb$
satisfying~\eqref{coefficients} and the class $\Lambda^{\rm all}$.
Then for $\kappa>1$ 
\begin{center}
$\bL_2$-approximation is $\kappa$-EC-WT for all considered $\bsa$ and $\bsb$.
\end{center}
\qed
\end{theorem}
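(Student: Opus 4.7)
The plan is to reuse the eigenvalue estimate \eqref{estlambda} from the proof of Point~3 of Theorem~\ref{mainthm}, combined with the identity $n^{\bL_2\mathrm{-app},\Lambda^{\rm all}}(\e,s)=\min\{n:\lambda_{s,n+1}\le\e^2\}$. The gap $\kappa-1>0$ will be exploited to absorb the $\log\,\e^{-1}$-contribution, which is exactly what fails when $\kappa=1$ and forces the additional assumption $\lim_{j\to\infty}a_j=\infty$ there.

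First I would fix an arbitrary $\eta>0$. From \eqref{estlambda}, $\lambda_{s,n+1}\le\e^2$ holds as soon as
$$
n+1\ \ge\ \e^{-2\eta}\,\prod_{j=1}^s\bigl(1+2D_\eta\,\omega^{\eta\,a_j}\bigr),
$$
where $D_\eta=\sum_{h=1}^\infty\omega^{\eta\,a_\ast(h^{b_\ast}-1)}<\infty$. Hence
$$
n^{\bL_2\mathrm{-app},\Lambda^{\rm all}}(\e,s)\ \le\ \e^{-2\eta}\,\prod_{j=1}^s\bigl(1+2D_\eta\,\omega^{\eta\,a_j}\bigr).
$$
Taking logarithms, using $\log(1+x)\le x$, and $a_j\ge a_\ast$ from \eqref{coefficients}, I obtain
$$
\log\,n^{\bL_2\mathrm{-app},\Lambda^{\rm all}}(\e,s)\ \le\ 2\eta\,\log\,\e^{-1}\ +\ 2D_\eta\,\omega^{\eta\,a_\ast}\,s.
$$

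Next I would divide by $s+[\log\,\e^{-1}]^\kappa$ and take the limsup as $s+\log\,\e^{-1}\to\infty$. The first term $2\eta\log\,\e^{-1}/(s+[\log\,\e^{-1}]^\kappa)$ tends to zero: if $\log\,\e^{-1}$ stays bounded then necessarily $s\to\infty$ and the ratio vanishes, while if $\log\,\e^{-1}\to\infty$ the ratio is dominated by $2\eta\,[\log\,\e^{-1}]^{1-\kappa}\to 0$ since $\kappa>1$. Therefore
$$
\limsup_{s+\log\,\e^{-1}\to\infty}\frac{\log\,n^{\bL_2\mathrm{-app},\Lambda^{\rm all}}(\e,s)}{s+[\log\,\e^{-1}]^\kappa}\ \le\ 2D_\eta\,\omega^{\eta\,a_\ast}.
$$

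Finally I would let $\eta\to\infty$. Monotone convergence yields $D_\eta\searrow 1$ (only the $h=1$ summand survives) and $\omega^{\eta\,a_\ast}\to 0$, so $2D_\eta\,\omega^{\eta\,a_\ast}\to 0$. Since the left-hand side above is independent of $\eta$, it must equal zero, which is precisely $\kappa$-EC-WT. The main obstacle to identify is that in the bound $2\eta\log\,\e^{-1}+2D_\eta\omega^{\eta\,a_\ast}\,s$ the two terms put opposite demands on $\eta$: the $s$-linear term prefers large $\eta$, while the $\log\,\e^{-1}$-term prefers small $\eta$. The strict inequality $\kappa>1$ decouples these two requirements by giving the first term an extra power of $\log\,\e^{-1}$ in the denominator, so that arbitrarily large $\eta$ is admissible without having to assume $\lim_{j}a_j=\infty$.
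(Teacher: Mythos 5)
Your proposal is correct and follows essentially the same route as the paper: both use the eigenvalue bound \eqref{estlambda} to get $\log n^{\bL_2\mathrm{-app},\Lambda^{\rm all}}(\e,s)\le 2\eta\log\e^{-1}+2sD_\eta\omega^{\eta a_*}$, kill the first term using $\kappa>1$, and then let $\eta\to\infty$ to make the second term vanish without any extra assumption on $\bsa$. Your explicit remark that the two terms place opposite demands on $\eta$ and that $\kappa>1$ decouples them is a nice articulation of exactly why the paper's argument works.
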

\begin{proof}
{}From \eqref{estlambda} we conclude that 
$[e^{\bL_2-\mathrm{app},\Lambda^{{\rm all}}}(n,s)]^2=\lambda_{s,n+1}\le \e^2$ 
for
$$
n \ge \frac{(1+2D_\eta\,\omega^{\,\eta a_*})^s}{\e^{2\eta}}
$$
and hence 
$$
n^{\bL_2\mathrm{-app},\Lambda^{\rm all}}(\varepsilon,s) \le \frac{(1+2D_\eta\,\omega^{\,\eta a_*})^s}{\e^{2\eta}}.
$$
Therefore for any positive~$\eta$ we have
$$
\log\,
  n^{\bL_2\mathrm{-app},\Lambda^{\rm all}}(\varepsilon,s)\le
  2\eta\,\log\,\varepsilon^{-1}
 +2s\,D_\eta \omega^{\,\eta a_*}. 
$$ 
Hence 
$$
\frac{\log\,
  n^{\bL_2\mathrm{-app},\Lambda^{\rm all}}(\varepsilon,s)}
{s+[\log\,\e^{-1}]^\kappa}
\le
  \frac{2\eta\,\log\,\varepsilon^{-1}}{s+[\log\,\e^{-1}]^\kappa}+
\frac{2s\,D_\eta\,\omega^{\,\eta\,a_*}}{s+[\log\,\e^{-1}]^\kappa}\le
  \frac{2\eta\,\log\,\varepsilon^{-1}}{s+[\log\,\e^{-1}]^\kappa}+
2\,D_\eta\,\omega^{\,\eta\,a_*}.
$$
The first term of the last bound goes to zero as
$s+\log\,\varepsilon^{-1}$ goes to infinity since $\kappa>1$,
whereas the second term
is arbitrarily small for large $\eta$. Therefore
$$
\lim_{s+\log\,\varepsilon^{-1}\to \infty} \frac{\log\,
  n^{\bL_2\mathrm{-app},\Lambda^{\rm all}}
(\varepsilon,s)}{s+[\log\,\e^{-1}]^\kappa}=0.
$$
This means that $\kappa$-EC-WT holds, for $\kappa>1$, 
for all considered $\bsa$ and $\bsb$.
\end{proof}

\section{EC-(strong) polynomial tractability}\label{secpt}
We now prove Points 5 and 6 of Theorem \ref{mainthm}. For this we need
the following proposition. 

\begin{proposition}\label{prop_std_spt}
Assume that
$$
B:=\sum_{j=1}^\infty\frac1{b_j}<\infty\quad\mbox{and}\quad
\alpha^*:=\liminf_{j\to\infty}\ \frac{\log\,a_j}j>0.
$$
Let
$\cG_{n,s}^{\ast}$ be a regular grid with mesh-sizes $m_1,m_2,\ldots,m_s$
given by
$$
m_j=2\,\left\lceil\,
    \left(\frac{\log\,\e^{-2}}{a_j^{\beta}\log\,
\omega^{-1}}\right)^{1/b_j}\right
\rceil\,-\,1\qquad\mbox{for all}\qquad j=1,2,\dots,s,
$$
with $\beta\in (0,1)$.

Then for any $\eta \in (0,\min(a_*^{1-\beta},1))$ and any $\delta \in
(0,\alpha^*)$ 
there exists a positive $\overline{C}_{\beta,\delta,\eta}$ 
such that 
$$
e^{\bL_{\infty}{\rm -app}}(H(K_{s,\bsa,\bsb}),
\cG_{n,s}^\ast)\le \overline{C}_{\beta,\delta,\eta}\ 
\e^{\min(a_*^{1-\beta},1)-\eta}
$$
and
$$
n=\mathcal{O}\left(\left(1+\log\,\e^{-1}
\right)^{B+(\log 3)/(\beta\,\delta)}\right),
$$
with the factor in the $\mathcal{O}$ notation independent
of $\e^{-1}$ and $s$, and dependent only on $\beta$ and~$\delta$.
\end{proposition}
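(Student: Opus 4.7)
The plan is to start from the preliminary estimate~\eqref{linftyapproxerr}, namely $\bigl[e^{\bL_{\infty}\mathrm{-app},\Lambda^{\mathrm{std}}}(H(K_{s,\bsa,\bsb}),\cG_{n,s}^\ast)\bigr]^2 \le 4nF_n$, with $F_n$ given in product form by~\eqref{eqdefFn}. Writing $F_n+1=\prod_{j=1}^s(1+S_j)$ with $S_j:=2\sum_{h\ge1}\overline{\omega}^{\,a_j 2^{-b_j}(m_j h)^{b_j}}$ and introducing the abbreviation $y_j:=(\log\e^{-2}/(a_j^\beta\log\omega^{-1}))^{1/b_j}$, the design $m_j=2\lceil y_j\rceil-1$ ensures that $m_j$ is close to $2y_j$ whenever $y_j\ge 1$, and equals $1$ exactly when $y_j\le 1$, i.e.\ when $a_j\ge (\log\e^{-2}/\log\omega^{-1})^{1/\beta}$. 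I will bound $n$ and $F_n$ in parallel, keeping every constant dependent only on $\beta$, $\delta$ and $\eta$.

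For the complexity bound I partition the indices into $J_+:=\{j:y_j>1\}$ and its complement; since $m_j=1$ outside $J_+$, one has $n=\prod_{j\in J_+}m_j$. The hypothesis $\alpha^*>\delta>0$ produces $j_0$ with $a_j\ge c_\delta\,\mathrm{e}^{\delta j}$ for $j\ge j_0$, and the condition $y_j>1$ therefore fails for $j\ge j^\ast(\e):=j_0+O\bigl((\beta\delta)^{-1}\log\log\e^{-1}\bigr)$, so $|J_+|=O(\log\log\e^{-1})$. For $j\in J_+$ the elementary bound $m_j\le 3y_j$ gives $\log m_j\le\log 3+b_j^{-1}\log\bigl(\log\e^{-2}/(a_j^\beta\log\omega^{-1})\bigr)$; summing over $j\in J_+$ and using $\sum_{j\ge 1}b_j^{-1}\le B$ yields
\[
\log n\le|J_+|\log 3\,+\,B\log\log\e^{-2}\,+\,O(1)\;\le\;\Bigl(\tfrac{\log 3}{\beta\delta}+B\Bigr)\log\log\e^{-1}\,+\,O(1),
\]
which is the claimed bound $n=\mathcal{O}\bigl((1+\log\e^{-1})^{B+\log 3/(\beta\delta)}\bigr)$.

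For the error bound I estimate each $S_j$ separately. When $j\in J_+$, the near-equality $m_j\approx 2y_j$ combined with the explicit factor $2^{-b_j}$ in $a_j 2^{-b_j}(m_j h)^{b_j}$ makes the leading $h=1$ exponent close to $a_j y_j^{b_j}=a_j^{1-\beta}\log\e^{-2}/\log\omega^{-1}$, so $\overline{\omega}^{\,a_j 2^{-b_j}m_j^{b_j}}$ is controlled by $\e^{a_j^{1-\beta}}$ up to corrections that can be absorbed into $\eta$; for $h\ge 2$ the extra factor $h^{b_j}\ge 2^{b_j}$ removes the $2^{-b_j}$, and a geometric-tail argument handles the remaining sum. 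When $j\notin J_+$, $a_j$ exceeds the threshold $(\log\e^{-2}/\log\omega^{-1})^{1/\beta}$, forcing $S_j$ to be superpolynomially small in $\log\e^{-1}$. Using $\prod(1+S_j)-1\le 2\sum_j S_j$ (valid once $\sum_j S_j$ is small), the monotonicity $a_j^{1-\beta}\ge a_*^{1-\beta}$, and the geometric decay of the ratios $\e^{a_j^{1-\beta}-a_*^{1-\beta}}$ for large $j$ produced by $a_j\ge c_\delta\mathrm{e}^{\delta j}$, the sum $\sum_j S_j$ is bounded uniformly in $s$ by a constant multiple of $\e^{2\min(a_*^{1-\beta},1)-2\eta}$; the $\min(\cdot,1)$ appears because the square-root bound $e\le 2\sqrt{nF_n}$ together with the polylogarithmic prefactor $\sqrt{n}=\e^{-o(1)}$ caps the effective rate. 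Inserting into $[e]^2\le 4nF_n$ and absorbing $\sqrt{n}$ into the $\eta$-slack completes the proof of the first claim. The main obstacle is precisely this uniform-in-$s$ control of $\sum_j S_j$: the transitional indices where $y_j$ is only moderately larger than $1$ are the most delicate, and the combined use of $\alpha^*>0$ (forcing exponential growth of $a_j$) and $B<\infty$ (controlling the effective number of contributing indices) is exactly what keeps $\overline{C}_{\beta,\delta,\eta}$ independent of $s$.
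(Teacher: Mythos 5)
There is a genuine gap, and it lies right at your starting point. You base the whole argument on the bound $[e]^2\le 4nF_n$ from \eqref{linftyapproxerr}--\eqref{eqdefFn}. That bound was obtained via the inequality $\abs{l}^{b}\le 2^{b}(\abs{v+l}^{b}+\abs{v}^{b})$, which costs both a square root ($\omega\to\overline{\omega}$) and, crucially, a factor $2^{-b_j}$ in the exponent: the $h=1$ term of your $S_j$ is $2\,\overline{\omega}^{\,a_j 2^{-b_j} m_j^{b_j}}$. In Section \ref{secexp} this loss is harmless because the mesh-sizes are tuned with an explicit compensating factor $4^{b_j}$ and all constants may depend on $s$. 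Here they may not, and under the standing hypotheses the loss is fatal: $B<\infty$ forces $b_j\to\infty$, and for the indices with $m_j=1$ (which is eventually all of them, since $a_j\ge \mathrm{e}^{\delta j}$) the exponent degenerates to $a_j2^{-b_j}$. Taking, say, $b_j=j^2$ and $a_j=\mathrm{e}^{j}$ — admissible, since $B<\infty$ and $\alpha^*=1$ — gives $a_j2^{-b_j}\to 0$, hence $S_j\to 2$ and $\prod_{j\le s}(1+S_j)\to\infty$ as $s\to\infty$. So your claim that $j\notin J_+$ forces $S_j$ to be superpolynomially small is false, $\sum_j S_j$ is not bounded uniformly in $s$, and no constant $\overline{C}_{\beta,\delta,\eta}$ independent of $s$ can be extracted from $4nF_n$.

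The paper avoids this by \emph{not} passing through $F_n$: it returns to \eqref{eqdoublesum} and uses that the $m_j$ are odd, so that $\bsv\in\calV_n$ satisfies $|v_j|\le(m_j-1)/2$ and hence $|v_j+m_jh|\ge\tfrac{m_j+1}{2}|h|$ for $h\ne 0$. This keeps the full exponent $a_j[(m_j+1)/2]^{b_j}h^{b_j}\ge a_j^{1-\beta}(\log\e^{-2})/\log\omega^{-1}$ with no $2^{-b_j}$ loss and no square root on $\omega$, even when $m_j=1$; the rest of the argument (threshold index $j^*_{\beta,\delta}$ from $a_j\ge\mathrm{e}^{\delta j}$, a uniformly convergent tail, $-1+\prod(1+\cdot)\le(\mathrm{e}-1)\gamma$, absorbing $\sqrt{n}$ into $\e^{-\eta}$) is essentially what you describe, and your complexity estimate $n=\mathcal{O}((1+\log\e^{-1})^{B+(\log 3)/(\beta\delta)})$ is correct and matches the paper's. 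A secondary point: even where your $F_n$ route does work ($y_j$ large), the $\overline{\omega}=\omega^{1/2}$ substitution halves the exponent, so the best you could reach is $S_j\lesssim\e^{a_j^{1-\beta}}$ rather than the $\e^{2a_j^{1-\beta}}$ needed for your asserted bound $\sum_jS_j\lesssim\e^{2\min(a_*^{1-\beta},1)-2\eta}$; that alone would still yield EC-SPT but not the proposition as stated. You should rework the error estimate starting from \eqref{eqdoublesum} and exploit the oddness of the $m_j$.
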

\begin{proof}
We first note that $m_j\ge1$ and is always an odd number. Furthermore $m_j=1$
iff $a_j\ge ((\log \varepsilon^{-2})/(\log \omega^{-1}))^{1/\beta}$.
Since for all $\delta\in(0,\alpha^*)$ there exists an integer
$j^*_\delta$ such that 
$$
a_j\ge \exp(\delta j) \quad\mbox{for all}\quad j\ge j^*_\delta,
$$
we conclude that
$$
j\ge j^{*}_{\beta,\delta}:=\max\left(j^*_\delta,
\frac{\log(((\log \varepsilon^{-2})/(\log\omega^{-1}))^{1/\beta})}
{\delta}\right)
\ \ \ \mbox{implies}\ \ \ m_j=1.
$$

{}From \eqref{eqdoublesum} we know that 
$$
\left[e^{\bL_{\infty}{\rm -app}}(H(K_{s,\bsa,\bsb}),\cG_{n,s}^*)\right]^2 \le
  4\sum_{\bsv\in \calV_n}\sum_{\bsl\in \cG_{n,s}^{* \bot}\setminus
\{\bszero\}}\omega_{\bsv+\bsl}.
$$
We now consider
$$
\sum_{\bsl \in \mathcal{G}^{* \bot}_{n,s} \setminus \{\bszero\} }
\omega_{\bsv+\bsl}=
\sum_{\emptyset \neq \uu \subseteq\{1,\ldots, s\}}
\prod_{j\in \uu}\left(
\sum_{h_j \in \mathbb{Z}\setminus \{0\}}\omega^{a_j|v_j+m_j h_j|^{b_j}}
\right)\, \prod_{j \not \in \uu}
\omega^{a_j|v_j|^{b_j}},
$$
where we separated the cases for $h_j \in \ZZ\setminus \{0\}$
and $h_j=0$.
We bound the second product by one such that
$$
\sum_{\bsl \in \mathcal{G}^{* \bot}_{n,s} \setminus \{\bszero\} }
\omega_{\bsv+\bsl}\le
\sum_{\emptyset \neq \uu \subseteq\{1,\ldots, s\}}
\prod_{j\in \uu}\left(
\sum_{h \in \mathbb{Z}\setminus \{0\}}\omega^{a_j|v_j+m_j h|^{b_j}}
\right).
$$
Note that for $\bsv\in\calV_n$ we have from \eqref{eqVn}
that $\abs{v_j}< (m_j +1)/2$ for $j=1,2,\ldots,s$.
In particular, if $m_j=1$ then $v_j=0$ and
\begin{equation}\label{ineqhone}
\sum_{h\in \mathbb{Z}\setminus \{0\}}\omega^{a_j|v_j+m_j
  h|^{b_j}}=2\sum_{h=1}^\infty\omega^{a_jh^{b_j}}\le
2\sum_{h=1}^\infty\omega^{a_jh^{b_\ast}}=2\omega^{a_j} 
\sum_{h=1}^\infty\omega^{a_j(h^{b_\ast}-1)}\le 2\omega^{a_j} D,
\end{equation}
where $D:=D_1=\sum_{h=1}^\infty \omega^{a_\ast (h^{b_\ast}-1)}$.

Let $m_j\ge3$. Since
$|v_j|<(m_j+1)/2$, 
we conclude that $|v_j|\le (m_j+1)/2-1=(m_j-1)/2$, and
$h\not=0$ implies
$$
|v_j+m_j h|\ge m_j|h|-|v_j|\ge
\frac{m_j+1}{2} |h|.
$$
Therefore
\begin{eqnarray}
\sum_{h \in \mathbb{Z}\setminus \{0\}}\omega^{a_j|v_j+m_j
  h|^{b_j}}&\le&
2\sum_{h=1}^\infty\omega^{a_j[(m_j+1)/2]^{b_j}h^{b_j}} \nonumber\\
&=& 2\omega^{a_j[(m_j+1)/2]^{b_j}} 
\sum_{h=1}^\infty\omega^{a_j[(m_j+1)/2]^{b_j}(h^{b_j}-1)}\nonumber\\
&\le & 2\omega^{a_j[(m_j+1)/2]^{b_j}} 
\sum_{h=1}^\infty\omega^{a_\ast (h^{b_\ast}-1)}\nonumber\\
&=& 2 \omega^{a_j[(m_j+1)/2]^{b_j}} D.\label{ineqhtwo}
\end{eqnarray}
The inequalities \eqref{ineqhone} and \eqref{ineqhtwo} can be combined as
$$
\beta_j:=\sum_{h \in \mathbb{Z}\setminus \{0\}}\omega^{a_j|v_j+m_j
  h|^{b_j}}\le
2 \omega^{a_j[(m_j+1)/2]^{b_j}} D.
$$
Note that
$$
\sum_{\emptyset \neq \uu \subseteq\{1,\ldots, s\}}
\prod_{j\in \uu}\left(\sum_{h \in \mathbb{Z}\setminus \{0\}}
\omega^{a_j |v_j+ m_j h|^{b_j}}\right)
=-1+
\sum_{\uu \subseteq\{1,\ldots, s\}}
\prod_{j\in \uu} \beta_j=-1+\prod_{j=1}^s(1+\beta_j).
$$
Consequently,
\begin{eqnarray*}
\left[e^{\bL_{\infty}{\rm -app}}(H(K_{s,\bsa,\bsb}),
\cG_{n,s}^\ast)\right]^2 & \le & 4 \sum_{\bsv \in \calV_n} \left[ -1 + \prod_{j=1}^s
\left(1 + 2\,\omega^{\,a_j[(m_j+1)/2]^{b_j}} D
\right)\right]\\
& = & 4 n \left[ -1 + \prod_{j=1}^s
\left(1 + 2\,\omega^{\,a_j[(m_j+1)/2]^{b_j}} D
\right)\right].
\end{eqnarray*}
Using $\log (1+x) \le x$ we obtain
$$
\log\left[ \prod_{j=1}^s
\left(1 + 2\,\omega^{\,a_j[(m_j+1)/2]^{b_j}} D
\right)\right] \le
2D \sum_{j=1}^s\omega^{\,a_j[(m_j+1)/2]^{b_j}}.
$$
{}From the definition of $m_j$ we have
$a_j[(m_j+1)/2]^{b_j}\ge a_j^{1-\beta}\,(\log\,\e^{-2})/\log\,\omega^{-1}$. Therefore,
$$
\omega^{a_j[(m_j+1)/2]^{b_j}}\le
\omega^{a_j^{1-\beta}\,(\log\,\e^{-2})/\log\,\omega^{-1}}=
\e^{2\,a_j^{1-\beta}}.
$$
Since $a_j\ge a_*$ 
for $j\le j^*_{\beta,\delta}-1$ and
$a_j\ge \exp(\delta j)$ for $j\ge j^*_{\beta,\delta}$
we obtain
\begin{eqnarray*}
\gamma&:=& 2D\sum_{j=1}^s\omega^{\,a_j[(m_j+1)/2)]^{b_j}}\le
2D\,\left((j^*_{\beta,\delta}-1)\e^{2a_*^{1-\beta}}
+\e^2\,
\sum_{j=j^*_{\beta,\delta}}^\infty\e^{2[\exp((1-\beta) \delta j)-1]}\right)\\
& \le & 2D\e^{2\min(a_*^{1-\beta},1)}\, 
\left(j^*_{\beta,\delta}-1 + 
\sum_{j=j^*_{\beta,\delta}}^\infty 
\e^{2[\exp((1-\beta) \delta j)-1]}\right).
\end{eqnarray*}

Without loss of generality, we now choose $\e$ such that $\e^{-2}\ge 2$.
Then,
$$
2D\, \left(j^*_{\beta,\delta}-1 + 
\sum_{j=j^*_{\beta,\delta}}^\infty \e^{2[\exp((1-\beta) \delta j)-1]}\right)
\le C_{\beta,\delta},
$$
where
$$
C_{\beta,\delta}:=2D\,\left(j^*_{\beta,\delta}
-1+\sum_{j=j^*_{\beta,\delta}}^\infty
\left(\frac{1}{2}\right)^{\exp((1-\beta) \delta j)-1}\right)<\infty.
$$

Hence we obtain 
$\gamma\le C_{\beta,\delta}\e^{2\min(a_*^{1-\beta},1)}$, 
and by choosing, without loss of generality, 
$\e^{-2\min(a_*^{1-\beta},1)}\ge C_{\beta,\delta}$, we have $\gamma\le 1$.

Using convexity we easily check that
$-1+\exp(\gamma) \le (\mathrm{e}-1)\gamma$ for all $\gamma\in[0,1]$. Thus
for $\e^{-2\min(a_*^{1-\beta},1)}\ge C_{\beta,\delta}$ we obtain
\begin{eqnarray*}
-1 + \prod_{j=1}^s
\left(1 +
2\,\omega^{\,a_j[(m_j+1)/2]^{b_j}} D\right)
& \le & -1 + \exp\left(2D
\sum_{j=1}^s \omega^{\,a_j[(m_j+1)/2]^{b_j}}\right)\\
&=&-1+\exp(\gamma)\le (\mathrm{e}-1)\gamma\\
& \le & C_{\beta,\delta}\,(\mathrm{e}-1)\e^{2\min(a_*^{1-\beta},1)}
\end{eqnarray*}
and hence
\begin{eqnarray}\label{bd1}
e^{\bL_{\infty}{\rm -app}}(H(K_{s,\bsa,\bsb}),\cG_{n,s}^\ast) 
\le 2 \sqrt{n C_{\beta,\delta}\,(\mathrm{e}-1)}\ \e^{\min(a_*^{1-\beta},1)}.
\end{eqnarray}
We now estimate the number $n$ of function values used by the
algorithm. We have
$$
n=\prod_{j=1}^sm_j=\prod_{j=1}^{\min(s,j^*_{\beta,\delta})}m_j\le
\prod_{j=1}^{j^*_{\beta,\delta}}
\left(1+2\left(\frac{\log\,\e^{-2}}{a_j^{\beta}\,
\log\,\omega^{-1}}\right)^{1/b_j}\right).
$$
We bound $j^*_{\beta,\delta}$ by the sum of the two terms defining it,
and obtain                    
\begin{eqnarray*}
n&\le& 3^{j^*_{\beta,\delta}} a_\ast^{-B\beta}\,
\left(\frac{\log\,\e^{-2}}{\log\,\omega^{-1}}\right)^B
\le
 3^{j_{\delta}^*} a_\ast^{-B\beta}\left(\frac{\log\,\e^{-2}}{\log\,\omega^{-1}}
\right)^{B+(\log 3)/(\beta\,\delta)}\\
&=&\mathcal{O}\left(\left(1+\log\,\e^{-1}
\right)^{B+(\log 3)/(\beta\,\delta)}\right).
\end{eqnarray*}
Inserting this into \eqref{bd1} we obtain for any $\eta>0$ that
\begin{eqnarray*}
e^{\bL_{\infty}{\rm -app}}(H(K_{s,\bsa,\bsb}),\cG_{n,s}^\ast) \le 
\overline{C}_{\beta,\delta,\eta}\ \e^{\min(1,a_*^{1-\beta})-\eta},
\end{eqnarray*}
where the positive quantity $\overline{C}_{\beta,\delta,\eta}$ 
depends on $\beta, \delta$ and $\eta$, but not on $\varepsilon^{-1}$ and $s$.
This completes the proof of the proposition.
\end{proof}

We are ready to prove Points 5 and 6 of Theorem \ref{mainthm}. We
consider four cases depending on the information class and the error
criterion.
\begin{itemize}
\item Case 1: $\Lambda^{\rm std}$ and the absolute error criterion. 

We already showed that EC-PT implies EC-PT + EXP and EC-PT +UEXP. 
Therefore  the chain of implications from EC-SPT+UEXP to EC-PT is
trivial.

Hence, it is enough to show that EC-PT implies EC-SPT+UEXP.  Note that 
EC-PT for $\bL_{\infty}$-approximation 
implies by Lemma~\ref{l2linfty} EC-PT for
$\bL_2$-approximation which in turn 
by \cite[Theorem~1, Point 5]{DKPW13}, 
see also Theorem~\ref{thm1in2},
implies EC-SPT+UEXP for $\bL_2$-approximation.
This, however, by  \cite[Theorem~1, Point 6]{DKPW13} implies that $B<\infty$ and
$\alpha^*>0$, where $\alpha^*$ is defined as in \eqref{eqalphastar}. 
We will show below that these conditions on $\bsa$ and $\bsb$ 
imply EC-SPT+UEXP for $\bL_{\infty}$-approximation.
This ends the proof of Point 5 for this case.

We now prove Point 6. 
The necessity of the conditions for EC-SPT+UEXP on $\bsa$ and~$\bsb$
for $\bL_{\infty}$-approximation and the class~$\Lambda^{\rm{std}}$
follows from the same conditions for $\bL_2$-approximation shown 
in \cite[Theorem~1, Point~6]{DKPW13},
and the fact that the information complexity for the $\bL_{\infty}$-case cannot
be smaller than for the $\bL_2$-case.

The sufficiency of the conditions is shown by the use of 
Proposition~\ref{prop_std_spt}, 
under the assumption of \eqref{eqalphastar}, which states that
$$
n^{L_\infty\mathrm{-app},\Lambda^{\rm std}}_{\mathrm{abs}}(
\overline{C}_{\beta,\delta,\eta}\,\e^{\min(a_*^{1-\beta},1)-\eta},s)
=\mathcal{O}\left((1+\log\,\e^{-1})\right)^{B+(\log\,3)/(\beta\delta)}.
$$
By replacing 
$\overline{C}_{\beta,\delta,\eta}\,\e^{\min(a_*^{1-\beta},1)-\eta}$ by
$\e$ we obtain 
\begin{eqnarray*}
n^{L_\infty\mathrm{-app},\Lambda^{\rm std}}_{\mathrm{abs}}(\e,s)
&=&\mathcal{O}\left(1+\log\,
[\overline{C}_{\beta,\delta,\eta}\,\e^{\min(a_*^{1-\beta},1)-\eta}]^{-1}
\right)^{B+(\log\,3)/(\beta\delta)}\\
&=&\mathcal{O}\left(1+\log\,\e^{-1}\right)^{B+(\log\,3)/(\beta\delta)}
\end{eqnarray*}
with the factor in the $\mathcal{O}$ notation independent of
$\e^{-1}$ and $s$. This proves EC-SPT+UEXP with exponent
$$
\tau=B+\frac{\log 3}{\beta\,\delta}.
$$
Since $\beta$ can be arbitrarily close to one, and $\delta$ can be
arbitrarily close to $\alpha^*$,
the exponent $\tau^*$ of EC-SPT is at most
$$
B+\frac{\log 3}{\alpha^*},
$$
where for $\alpha^*=\infty$ we have $\frac{\log 3}{\alpha^*} = 0$.
This completes the proof of Point 6 
for $\Lambda^{\rm std}$ and the absolute error criterion.

\item Case 2: $\Lambda^{\rm std}$ and the normalized error criterion. 

To prove Point 5, it is clear 
that EC-SPT+UEXP implies EC-PT. Let us now assume we have EC-PT. 
Then we also have EC-PT for $\Lambda^{\rm all}$ and the 
normalized error criterion. This, by what we will show below,
implies \eqref{eqalphastar}. 
As we know, \eqref{eqalphastar} implies EC-SPT+UEXP 
for $\Lambda^{\rm std}$ and the absolute error criterion. 
However, by \eqref{complexityabsnorm}, the latter implies EC-SPT+UEXP 
for $\Lambda^{\rm std}$ and the normalized error criterion.  

To prove Point 6, the sufficiency of the conditions 
follows from the corresponding results 
for $\Lambda^{\rm std}$ and the absolute error criterion. 
The necessary conditions for $\Lambda^{\rm std}$ follow 
from the necessary conditions for $\Lambda^{\rm all}$ 
and the normalized error criterion that we will prove below.

\item Case 3: $\Lambda^{\rm all}$ and the absolute error criterion.

Let us again start with Point 5. As before, it is enough to show
that EC-PT implies EC-SPT+UEXP. EC-PT for $\bL_\infty$-approximation and
$\Lambda^{\rm all}$ implies
EC-PT for $\bL_2$-approximation for $\Lambda^{\rm all}$. 
Then it follows from~\cite[Theorem~1, Points~5 and~6]{DKPW13}
that \eqref{eqalphastar} holds. This condition, however, 
implies EC-SPT+UEXP for $\Lambda^{\rm std}$ and 
the absolute error criterion, and hence also 
EC-SPT+UEXP for $\Lambda^{\rm all}$ and the absolute error criterion. 
Point 5 is therefore shown.

For Point 6, the sufficient conditions for EC-SPT+UEXP 
follow from \eqref{complexitystdall},
and from the results for $\Lambda^{\rm std}$ and the 
absolute error criterion. On the other hand, 
the necessary conditions for EC-SPT+UEXP follow 
from \eqref{complexityl2linfty}, and
from the results for $\bL_2$-approximation in~\cite[Theorem~1, Point~6]{DKPW13}.

\item Case 4: $\Lambda^{\rm all}$ and the normalized error criterion.

Let us start with Point 5. 
Again it is obvious that EC-SPT+UEXP implies EC-PT. 
Conversely, assume now that we have EC-PT. 
Then by \eqref{eqcomplower}, we obtain 
for $n=n_{\rm norm}^{\bL_{\infty}{\rm -app},\Lambda^{\rm all}}(\varepsilon,s)$,
\[
 n\ge (1-\varepsilon^2)\prod_{j=1}^s (1+2\omega^{a_j}).
\]
Since we assumed EC-PT, this means that 
$\prod_{j=1}^s (1+2\omega^{a_j})$ may at most depend polynomially on
$s$. 
However, due to results from \cite{SW01}, this can only happen if 
\begin{equation}\label{eqcondpt}
 \limsup_{s\To\infty} \sum_{j=1}^s \omega^{a_j}/\log s <\infty.
\end{equation}
So, let us assume that \eqref{eqcondpt} is fulfilled.

Next, consider the square of the initial error,
\begin{eqnarray*}
 \left[e^{\bL_{\infty}\mathrm{-app}}(0,s)\right]^2&=&
\prod_{j=1}^s\left(1+2\sum_{h=1}^\infty \omega^{a_j h^{b_j}}\right)
 \le \prod_{j=1}^s\left(1+2\sum_{h=1}^\infty \omega^{a_j h^{b_\ast}}\right)\\
 &=& \prod_{j=1}^s\left(1+2\omega^{a_j}
\sum_{h=1}^\infty \omega^{a_j (h^{b_\ast}-1)}\right)
 \le \prod_{j=1}^s\left(1+2\omega^{a_j}
\sum_{h=1}^\infty \omega^{a_\ast (h^{b_\ast}-1)}\right)\\
&=&\prod_{j=1}^s\left(1+\omega^{a_j}A\right),
\end{eqnarray*}
where $A:=2D_1= 2\sum_{h=1}^\infty \omega^{a_\ast
  (h^{b_\ast}-1)}<\infty$. 
Due to \eqref{eqcondpt} we see 
that $e^{\bL_{\infty}\mathrm{-app}}(0,s)$ is bounded 
by an expression that depends at most polynomially on $s$. 
Hence it follows that the conditions for 
EC-PT regarding the normalized and the absolute error criteria are 
equivalent. For the absolute error criterion, we already 
know that EC-PT implies \eqref{eqalphastar}. This implies EC-SPT+UEXP 
due to Point  6 that we show below.

Let us come to Point 6. Suppose that we have EC-SPT+UEXP. 
This implies EC-PT, which, by the previous argument implies 
\eqref{eqalphastar}. 

Suppose now that \eqref{eqalphastar} holds. 
Then we know from above that EC-SPT+UEXP 
for $\Lambda^{\rm all}$ and the absolute 
error criterion holds. 
This implies EC-SPT+UEXP for $\Lambda^{\rm all}$ and the normalized 
error criterion. \qed  
\end{itemize}

\section{Comparison of $\bL_\infty$- and $\bL_2$-approximation}\label{secend}

We briefly compare the results for $\bL_\infty$- and
$\bL_2$-approximation. As before,
$$
B=\sum_{j=1}^\infty \frac{1}{b_j}\ \ \ \  \mbox{and}\ \ \ \ 
\alpha^*=\liminf_{j\rightarrow \infty}\, \frac{\log a_j}{j}.
$$                                       
Unless noted otherwise, the conditions in Table~\ref{tablecomp} 
are valid for $\Lambda^{{\rm all}}$ 
and $\Lambda^{{\rm std}}$ and, in the $\bL_{\infty}$-case, 
for both error criteria.

\begin{table}
\begin{center}
\renewcommand*\arraystretch{2}
\begin{tabular}{|l|l|l|}
\hline
Property & conditions ($\bL_\infty$) & conditions ($\bL_2$)\\
\hline \hline
EXP & for all considered $\bsa$ and $\bsb$ & 
for all considered $\bsa$ and $\bsb$\\
\hline
UEXP & iff $\bsb$ such that $B< \infty$ & 
iff $\bsb$ such that $B< \infty$\\
\hline
$\kappa$-EC-WT, $\kappa>1$ for $\Lambda^{{\rm all}}$ & iff 
$\lim_j a_j=\infty$ & for all considered $\bsa$ and $\bsb$\\
\hline
$\kappa$-EC-WT, $\kappa>1$ for $\Lambda^{{\rm std}}$ & iff 
$\lim_j a_j=\infty$ & open 
\\
\hline
EC-WT & iff 
$\lim_j a_j=\infty$ & iff $\lim_j a_j=\infty$ \\
\hline
EC-PT & iff EC-SPT & iff EC-SPT\\
\hline
EC-SPT & iff $B< \infty$ and $\alpha^* >0$ & iff 
$B< \infty$ and $\alpha^* >0$ \\
\hline
\end{tabular}
\caption{Comparison of results for $\bL_\infty$- and $\bL_2$-approximation}\label{tablecomp}
\end{center}
\end{table}

We see that the only difference between $\bL_{\infty}$- 
and $\bL_2$-approximation is 
for the property $\kappa$-EC-WT for $\kappa>1$ 
for the information class $\Lambda^{{\rm all}}$. 
The condition for  $\kappa$-EC-WT for $\kappa>1$ 
for $\bL_2$-approximation and the information class 
$\Lambda^{{\rm std}}$ remains an open question.

\section{Remarks on $\bL_p$-approximation}\label{rem_lp_approx}

Let us, finally, briefly comment on the case of 
$\bL_p$-approximation for $p\in [2,\infty]$. 
Let us consider $\bL_p$-approximation of functions in 
$H(K_{s,\bsa,\bsb})$, and the absolute error criterion. 
Let $e^{\bL_p-{\rm app},\Lambda}(n,s)$ 
denote the $n$th minimal worst case error, 
and let $n_{\rm abs}^{\bL_p-{\rm app},\Lambda}(\e , s)$ be
the information complexity of this problem. 

Then, similarly to the proof of Lemma \ref{l2linfty}, we see that
\[
 e^{\bL_2-{\rm app},\Lambda}(n,s)
\le e^{\bL_p-{\rm app},\Lambda}(n,s)
\le e^{\bL_\infty-{\rm app},\Lambda}(n,s)\ \ \ \ 
\mbox{for all}\ \ n,s\in\NN,
\]
and
\[
 n^{\bL_2-{\rm app},\Lambda}(\e , s) 
\le n_{\rm abs}^{\bL_p-{\rm app},\Lambda}(\e , s)
\le n_{\rm abs}^{\bL_\infty-{\rm app},\Lambda}(\e , s)
\ \ \ \ \mbox{for all}\ \ \e\in (0,1),\ s\in\NN.
\]
Hence, we can conclude that for all situations mentioned in 
Table \ref{tablecomp}, except for $\kappa$-EC-WT with $\kappa>1$, 
the results for $\bL_p$-approximation and the absolute error criterion 
are the same as those for $\bL_2$-approximation and $\bL_\infty$-approximation. 
Whether a similar observation is also true 
for the normalized error criterion and for $p\in[1,2)$ remain
an open question.

\begin{small}
\noindent\textbf{Authors' addresses:}
\\ \\
\noindent Peter Kritzer,
\\
Johann Radon Institute for Computational and Applied Mathematics (RICAM),
Austrian Academy of Sciences, Altenbergerstr.~69, 4040 Linz, Austria\\
 \\
\noindent Friedrich Pillichshammer,
\\
Department of Financial Mathematics,
Johannes Kepler University Linz, Altenbergerstr.~69, 4040 Linz, Austria\\
 \\
\noindent Henryk Wo\'{z}niakowski, \\
Department of Computer Science, Columbia University, New York 10027,
USA and Institute of Applied Mathematics,
University of Warsaw, ul. Banacha 2, 02-097 Warszawa, Poland\\ \\

\noindent \textbf{E-mail:} \\
\texttt{peter.kritzer@oeaw.ac.at}\\
\texttt{friedrich.pillichshammer@jku.at} \\
\texttt{henryk@cs.columbia.edu}
\end{small}
\end{document}